\tikzset{font=\small}
\newtheorem{theorem}{Theorem}[section]
\newtheorem{lemma}[theorem]{Lemma}
\newtheorem{corollary}[theorem]{Corollary}
\newtheorem{proposition}[theorem]{Proposition}
\theoremstyle{definition}
\newtheorem{remark}[theorem]{Remark}
\newtheorem{definition}[theorem]{Definition}
\newtheorem{example}[theorem]{Example}
\numberwithin{equation}{section}
\title[Quotients of the Booleanization of an inverse semigroup]{Quotients of the Booleanization of an inverse semigroup}
\author{Ganna Kudryavtseva}
\address{G. Kudryavtseva: University of Ljubljana,
Faculty of Civil and Geodetic Engineering, Jamova cesta~2, SI-1000 Ljubljana, Slovenia / Institute of Mathematics, Physics and Mechanics, Jadranska ulica 19, SI-1000 Ljubljana, Slovenia}
\email{ganna.kudryavtseva\symbol{64}fgg.uni-lj.si}
\subjclass[2010]{20M18, 18B40, 46L06, 06E75}
\keywords{Inverse semigroup,  tight representation, Boolean inverse semigroup, groupoid $C^*$-algebra}
\thanks{The author was partially supported by  ARRS grant P1-0288.}
\begin{document}

\begin{abstract}  
We introduce $X$-to-join representations of inverse semigroups which are a relaxation of the notion of a cover-to-join representation. We construct the universal $X$-to-join  Booleanization of an inverse semigroup $S$ as a weakly meet-preserving quotient of the universal  Booleanization ${\mathrm B}(S)$ and show that all such quotients of ${\mathrm B}(S)$ arise via $X$-to-join representaions.  As an application, we provide groupoid models for the intermediate boundary quotients of the $C^*$-algebra of a Zappa-Sz\'ep product right LCM semigroup by  Brownlowe, Ramagge, Robertson and Whittaker.
\end{abstract}

\maketitle

\section{Introduction}\label{s0:introduction}

A Boolean inverse semigroup is an inverse semigroup whose idempotents form a Boolean algebra and which has joins of compatible pairs of elements (see Section \ref{subs:distr_bool}).  In a Boolean inverse semigroup, the multiplication distributes over finite compatible joins. The Booleanization ${\mathrm B}(S)$ of an inverse semigroup $S$ is the universal Boolean inverse semigroup with respect to all representations of $S$ in Boolean inverse semigroups (see Section \ref{subs:universal}). It is known \cite{Lawson20} that ${\mathrm B}(S)$ can be obtained as the dual Boolean inverse semigroup of  Paterson's universal groupoid ${\mathcal{G}}(S)$ of $S$ under non-commutative Stone duality \cite{Lawson12} between Boolean inverse semigroups and Stone groupoids. 

Tight representations of inverse semigroups \cite{Exel09, Exel:comb, Exel19} are useful in the study of various $C^*$-algebras generated by partial isometries, see, e.g., \cite{BKQS19, DM, Exel:comb, EGS12, EP17, OP19, Starling15} and are closely related to cover-to-join representations \cite{DM, Lawson12, LV19} which are more simpler defined but equally useful: universal objects based on them are isomorphic to those defined on tight representations \cite{Exel19}. The tight Booleanization ${\mathrm B}_{tight}(S)$ of an inverse semigroup $S$ is the universal Boolean inverse semigroup with respect to tight  representations (or, equivalently, with respect to cover-to-join representations) of $S$ in Boolean inverse semigroups. Under the non-commutative Stone duality, it is dual to Exel's tight groupoid ${\mathcal{G}}_{tight}(S)$ of $S$. The latter groupoid is the restriction of ${\mathcal{G}}(S)$ to the tight spectrum ${\widehat{E(S)}_{tight}}$ of the semilattice of idempotents $E(S)$ of $S$ which is a closed and invariant subset of the spectrum $\widehat{E(S)}$. Equivalently, ${\mathrm B}_{tight}(S)$ is a quotient of ${\mathrm B}(S)$ by certain relations, which can be looked at as abstract Cuntz-Krieger relations, see~\cite{LV19}.

For any closed and invariant subset ${\mathcal X}$ of $\widehat{E(S)}$ one can similarly consider the respective restriction of ${\mathcal{G}}(S)$, and the non-commutative Stone duality assigns it a Boolean inverse semigroup being a quotient of ${\mathrm B}(S)$. The present work was inspired by the observation that, besides the tight spectrum of $E(S)$, there are other interesting closed invariant subsets of $\widehat{E(S)}$. Equivalently, besides tight representations, there are other naturally arising representations of~$S$.  For example, prime representations generalize proper morphisms from distributive lattices to Boolean algebras (see Proposition \ref{prop:1}) similar to the way that proper tight representations generalize morphisms between Boolean algebras  (see \cite[Proposition 2.10]{Exel09} and Proposition~\ref{prop:gba_tight}). We also provide examples that come from the theory of $C^*$-algebras, in particular from the theory of intermediate boundary quotient $C^*$-algebras of  Zappa-Sz\'ep product right LCM semigroups by  Brownlowe, Ramagge, Robertson and Whittaker \cite{BRRW}. Using suitable representations (which are $X$-to-join representations defined in this paper, and differ from tight representations), we provide groupoid models for the additive and the multiplicative intermediate boundary quotients $C^*_A(U\bowtie A)$ and $C^*_U(U\bowtie A)$ of the universal $C^*$-algebra $C^*(U \bowtie A)$  of the Zappa-Sz\'ep product $U \bowtie A$, see Section \ref{sect:intermediate}. 

Other examples from the theory of $C^*$-algebras are given in the recent work by Exel and Steinberg \cite{ES19}  where groupoid models of the Matsumoto $C^*$-algebra  and the Carlsen-Matsumoto $C^*$-algebra are constructed. The groupoids are obtained as restrictions (to closed invariant subsets that differ form the tight spectrum) of the universal groupoid of a relevant inverse semigroup. 

The structure of the paper is as follows. In Section \ref{s:preliminaries} we provide preliminaries. Then in Section \ref{sec:X} we define and study $X$-to-join representations of semilattices and inverse semigroups: in Subsection \ref{subs:pitight} we discuss the connection of $X$-to-join representations of semilattices with $\pi$-tight representations from \cite{ES18} and in Theorem \ref{th:booleanization2} we invoke the non-commutative Stone duality to prove that the universal $X$-to-join Booleanization of an inverse semigroup $S$ (where $X$ is invariant) is realized as the dual Boolean inverse semigroup of a suitable restriction of ${\mathcal G}(S)$. In Section \ref{sect:examples} we  discuss prime and core representations of inverse semigroups as examples of $X$-to-join representaitons. Further, in Section \ref{sect:generators} we give a presentation of the universal $X$-to-join Booleanization ${\mathrm B}_X(S)$ of $S$ via generators and relations and in Section \ref{sect:all} we show that all weakly meet-preserving quotients of ${\mathrm B}(S)$ are of the form ${\mathrm B}_X(S)$ for a suitable $X\subseteq E(S)\times {\mathcal P}_{fin}(E(S))$, see Theorem~\ref{th:quotients2} and Corollary \ref{cor:quotients1}. Finally, in Section \ref{sect:intermediate} we approach intermediate boundary quotients of $C^*$-algebras of Zappa-Sz\'ep product right LCM semigroups by Brownlowe, Ramagge, Robertson and Whittaker  \cite{BRRW} via $X$-to-join representations. Let $P$ be a right LCM semigroup and ${\mathcal S}_P$ its left inverse hull. Starling \cite{Starling15} proved that the boundary quotient ${\mathcal Q}(P)$ of the universal $C^*$-algebra $C^*(P)$ of $P$ is a groupoid $C^*$-algebra with the underlying groupoid being the universal tight groupoid of the inverse semigroup ${\mathcal S}_P$. Adapting the arguments of \cite{Starling15} we prove that the additive and the multiplicative intermediate boundary quotients $C^*_A(U\bowtie A)$ and $C^*_U(U\bowtie A)$ (where $P=U\bowtie A$ is a Zappa-Sz\'ep product, see Section \ref{sect:intermediate} for details) of $C^*(U\bowtie A)$ are groupoid $C^*$-algebras where the groupoids are the dual groupoids of suitable $X$-to-join Booleanizations of ${\mathcal S}_P$, see Theorem \ref{th:isomorphisms} and Corollary~\ref{cor:iso}. 

\section{Preliminaries}\label{s:preliminaries}

\subsection{First definitions and conventions} For all undefined notions form inverse semigroup theory we refer the reader to  \cite{Lawson}. For an inverse semigroup $S$, its semilattice of idempotents is denoted by $E(S)$. The {\em natural partial order} on $S$ is given by $a\leq b$ if and only if there is $e\in E(S)$ such that $a=be$. All inverse semigroups that we consider have a bottom element~$0$ which is the minimum element with respect to $\leq$. For $a\in S$ we put ${\mathbf d}(a) = a^{-1}a$ and ${\mathbf r}(a) = aa^{-1}$. Elements $a,b\in S$ are called {\em compatible}, denoted $a\sim b$, if $a{\mathbf d}(b) = b{\mathbf d}(a)$ and ${\mathbf r}(b)a = {\mathbf r}(a)b$, or, equivalently, if the elements $a^{-1}b$ and $ab^{-1}$ are both idempotents.

A {\em semilattice} is  an inverse semigroup $S$ such that $S=E(S)$. The natural partial order on a semilattice $E$ is given by $a\leq b$ if and only if $a=ab$. It follows that $ab$ is the greatest lower bound $a\wedge b$ of  $a$ and $b$.  Hence a semilattice can be characterized as a poset $E$ with zero such that for any $a,b\in E$ the greatest lower bound $a\wedge b$ exists in $E$.  

By a {\em Boolean algebra} we mean what is often called a {\em generalized Boolean algebra}, that is, a relatively complemented distributive lattice with the bottom element $0$. A {\em unital Boolean algebra} is a Boolean algebra with a top element, $1$. 

If $Y$ is a poset and $A\subseteq Y$, the {\em downset} of $A$ is the set $A^{\downarrow} = \{b\in Y\colon b\leq a \text{ for some } a\in A\}$, and the {\em upset} of $A$ is the set $A^{\uparrow} = \{b\in Y\colon b\geq a \text{ for some } a\in A\}$. If $A=\{a\}$ we write $a^{\downarrow}$ and $a^{\uparrow}$ for $A^{\downarrow}$ and $A^{\uparrow}$, respectively.

A {\em Stone space} is a Hausdorff space which has a basis of compact-open sets.  Stone spaces are locally compact.
A map between topological spaces is called {\em proper} if inverse images of compact subsets are compact.

\subsection{Representations and proper representations of semilattices}  

\begin{definition}\label{def:repr}
A {\em representation} of a semilattice $E$ in a Boolean algebra $B$ is a map $\varphi\colon E\to B$ such that:
\begin{enumerate}
\item $\varphi(0)=0$;
\item  for all $x,y\in E$:  $\varphi(x\wedge y) = \varphi(x)\wedge \varphi(y)$.
\end{enumerate}
\end{definition}

\begin{definition} \label{def:proper} A representation $\varphi\colon E\to B$ of a semilattice $E$ in a Boolean algebra $B$ is called {\em proper} if for any $y\in B$ there are  $x_1, \dots, x_n\in E$ such that $\varphi(x_1) \vee \dots \vee \varphi(x_n)\geq y$.
\end{definition}

If $E$ admits the structure of a lattice and the representation $\varphi$ preserves  the operation $\vee$ then $\varphi\colon E\to B$ is proper if for any $y\in B$ there is some $x\in E$ such that $\varphi(x) \geq y$, a notion that appears in~\cite{Doctor}.
Proper representations of semilattices appear in \cite{Exel19} as {\em non-degenerate} representations. If $\varphi\colon E\to B$ is a representation then restricting its codomain to the ideal of $B$ generated by $\varphi(E)$ one obtains a proper representation.
In this paper we will consider exclusively proper representations of semilattices in Boolean algebras.

\begin{definition}
A {\em character} of a semilattice $E$ is a non-zero representation $E\to \{0,1\}$. Equivalently, it is a proper representation  $E\to \{0,1\}$.
\end{definition}

\subsection{The universal Booleanization of a semilattice} Let $E$ be a semilattice. 
A standard argument (to be found, e.g., in \cite[Lemma~34.1]{GH}) shows that the set of all characters $\widehat{E}$ is a closed subset with zero removed of the product space $\{0,1\}^{E}$ and thus, endowed with the subspace topology, is a Stone space. For any $a\in E$ we put 
$$M_a=\{\varphi\in \widehat{E}\colon \varphi(a)=1\}.$$
 The basis of the subspace topology of $\widehat{E}$ are the sets $M_a$, $a\in E$, together with the sets
$$
M_{a;b_1,\dots b_k} = \{\varphi\in \widehat{E}\colon \varphi(a)=1, \varphi(b_1) = \dots = \varphi(b_k) = 0\},
$$
where $k\geq 1$, $a, b_1,\dots, b_k \in E$ and $b_1,\dots, b_k\leq a$. This topology is called the {\em patch topology}.  The set of compact-opens of the space  $\widehat{E}$ forms a Boolean algebra denoted by ${\mathrm B}(E)$ and called the {\em Booleanization} (or  the {\em universal Booleanization}) of $E$. The map $\iota_{{\mathrm B}(E)}\colon E\to {\mathrm B}(E)$ given by $a\mapsto M_a$ is a proper representation of $E$ in ${\mathrm B}(E)$, and, moreover, $\varphi(E)$ generates ${\mathrm B}(E)$ as a Boolean algebra.

\subsection{The tight Booleanization of a semilattice}

Let $E$ be a semilattice. 

\begin{definition} A  subset $Z\subseteq E$ is called a {\em cover} of $x\in E$ if for every $y\in E$ satisfying $y\leq x$ and $y\neq 0$ there is some $z\in Z$ such that $y \wedge z\neq 0$. 
\end{definition} 

The following definition is a variation of the definition due to Donsig and Milan \cite{DM}.

\begin{definition} Let $E$ be a semilattice and $B$ a Boolean algebra. A representation $\varphi\colon E\to B$ is called {\em cover-to-join} provided that it is proper and for any $x\in E$ and for any finite cover $Z$ of $x$ we have:
$$
\varphi(x) = \bigvee_{z\in Z} \varphi(z).
$$
\end{definition}

Cover-to-join representations of semilattices in Boolean algebras are closely related to {\em tight} representations \cite{Exel19} introduced by Exel in \cite{Exel09} for the case where the codomain is a unital Boolean algebra. The definition of a tight representation is a bit more involved than that of a cover-to-join representation and we do not provide it here, because for  our purposes it sufficies to work with cover-to-join representations.  A proper representation of a semilattice $E$ in a  Boolean algebra $B$ is tight if and only if it is cover-to-join, see Exel \cite{Exel19} for details.

\begin{definition} A {\em morphism} between Boolean algebras is a map that preserves the operations $0$, $\vee$, $\wedge$ and $\setminus$ and which is proper.
\end{definition}

We record the following extension of \cite[Proposition 2.10]{Exel09}.

\begin{proposition} \label{prop:gba_tight} Suppose that the semilattice $E$ admits the structure of a Boolean algebra. Then a proper representation $\varphi\colon E\to B$ (where $B$ is a Boolean algebra) is tight if and only if it is a morphism between  Boolean algebras.
\end{proposition}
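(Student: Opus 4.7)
The plan is to exploit the fact that $\varphi$ is already given as a proper representation of the semilattice $E$, so it automatically preserves $0$ and $\wedge$. Being a morphism of Boolean algebras is therefore equivalent to additionally preserving binary joins $\vee$ and relative differences $\setminus$. I would thus reformulate the claim as: under the standing hypotheses, $\varphi$ is cover-to-join if and only if $\varphi$ preserves $\vee$ and $\setminus$.

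For the ``only if'' direction, the key observation is that in a Boolean algebra $E$ the pair $\{a,b\}$ is a cover of $a \vee b$: for any $y \in E$ with $0 \neq y \leq a \vee b$ one has $y = (y \wedge a) \vee (y \wedge b)$, so at least one of $y \wedge a$ and $y \wedge b$ is nonzero. The cover-to-join property then yields $\varphi(a \vee b) = \varphi(a) \vee \varphi(b)$. To handle $\setminus$, I would use the disjoint decomposition $a = (a \wedge b) \vee (a \setminus b)$ in $E$, apply $\varphi$ together with the already established preservation of $\vee$ and $\wedge$, and conclude $\varphi(a \setminus b) = \varphi(a) \setminus \varphi(a \wedge b) = \varphi(a) \setminus \varphi(b)$.

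For the ``if'' direction, suppose $\varphi$ preserves all Boolean operations and let $Z = \{z_1, \ldots, z_n\}$ be a finite cover of $x \in E$. After replacing each $z_i$ by $z_i \wedge x$ (which is still a cover of $x$, now lying below $x$), put $w = z_1 \vee \cdots \vee z_n$ and $y = x \setminus w$. Then $y \leq x$, and for every $i$ we have $y \wedge z_i \leq y \wedge w = 0$, so the cover property forces $y = 0$; hence $x = w$. Applying $\varphi$ and using preservation of $\vee$ gives $\varphi(x) = \varphi(z_1) \vee \cdots \vee \varphi(z_n)$, as required.

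The argument is almost entirely definitional once the reduction in the first paragraph is in place. The only point needing a moment's thought is the Boolean-algebraic fact that a finite cover of $x$ in $E$ must actually join to $x$ in $E$, which I expect to be the main (and still quite mild) obstacle.
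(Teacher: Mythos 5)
Your argument is correct. The paper records this proposition without proof (it is stated as an extension of Exel's Proposition 2.10, whose unital-case argument is essentially the one you outline), and your route is the expected one: reduce tightness to the cover-to-join condition via the paper's stated equivalence for proper representations, observe that $\{a,b\}$ covers $a\vee b$ by distributivity, and show that a finite cover of $x$ lying below $x$ must join to $x$ by taking the relative complement of its join. The only step worth spelling out is the $\setminus$-preservation: from $a=(a\wedge b)\vee(a\setminus b)$ you also need $\varphi(a\wedge b)\wedge\varphi(a\setminus b)=\varphi(0)=0$ (which follows from $\wedge$-preservation) in order to identify $\varphi(a\setminus b)$ with the relative complement $\varphi(a)\setminus\varphi(a\wedge b)$; without the disjointness you would only get an inequality.
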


 \begin{remark}
According to Exel \cite{Exel09} a character $\varphi\colon E\to \{0,1\}$ is {\em tight} if it belongs to the closure of the space of ultra characters of $E$. It follows from \cite[Corollary 4.3]{Exel19} that a a character $\varphi\colon E\to \{0,1\}$ is tight if and only if it is cover-to-join. 
\end{remark}

Let $\widehat{E}_{tight}$  denote the Stone space of tight characters with the subspace topology inherited from $\widehat{E}$. 
The set of compact-opens of the space  $\widehat{E}_{tight}$ forms a Boolean algebra denoted by ${\mathrm B}_{tight}(E)$ and called the {\em tight Booleanization} (or the {\em cover-to-join Booleanization}) of $E$. The map $\iota_{{\mathrm B}_{tight}(E)}\colon E\to {\mathrm B}_{tight}(E)$ given by $a\mapsto M_a\cap \widehat{E}_{tight}$ is a cover-to-join representation of $E$ in ${\mathrm B}_{tight}(E)$. 

\subsection{Stone duality for Boolean algebras} 
Let $B$ be a Boolean algebra. A character $\varphi\colon B \to \{0,1\}$ is called an {\em ultra character}, if $\varphi^{-1}(1)$ is an ultrafilter, or equivalently, a maximal filter in $B$, see, e.g., \cite{GH} for more details on this. 
The set of ultra characters coincides with the set of tight characters $\widehat{B}_{tight}$.

Suppose that $\psi\colon C\to D$ is a morphism between Boolean algebras. Then for any ultra character $\varphi\colon D\to \{0,1\}$ the map $\varphi\psi\colon  C\to \{0,1\}$ is an ultra character of $C$, which induces a continuous proper map $\widehat{D}_{tight}\to \widehat{C}_{tight}$.

Let $X$ be a  Stone space. Then the set of all its compact-opens forms a Boolean algebra, denote it by $\widehat{X}$. 
If $\alpha\colon X \to Y$ is a continuous proper map between Stone spaces then for any compact-open $A\subseteq Y$ its inverse image $\alpha^{-1}(A)$ is a compact-open in $X$. This induces a morphism $\widehat{Y}\to \widehat{X}$ of Boolean algebras.

It is a classical result stemming back to Stone \cite{Stone36} (see also \cite{Doctor}) that the assignments describedÊ give rise to functors which establish a dual equivalence between  the category of Boolean algebras and the category of Stone spaces.

\subsection{Boolean inverse semigroups} \label{subs:distr_bool}

\begin{definition} An inverse semigroup $S$  is called 
{\em Boolean} if $E(S)$ admits the structure of a Boolean algebra and for any $a,b\in S$ such that $a\sim b$ the least upper bound $a\vee b$, with respect to the natural partial order, exists in $S$.
\end{definition}

\begin{definition}
A {\em morphism} $\varphi\colon S\to T$ between  Boolean inverse semigroups is a morphism of semigroups such that $\varphi|_{E(S)}$ is a morphism of Boolean algebras and satisfying $\varphi(a\vee b) = \varphi(a) \vee \varphi(b)$ for any $a,b\in S$ such that $a\sim b$.
\end{definition}

Morphsims between Boolean inverse semigroups are sometimes referred to as {\em additive morphisms}.

A morphism $\varphi\colon S\to T$ between Boolean inverse semigroups is {\em weakly meet-preserving}, if for any $d\in T$ and $a,b\in S$ such that $d \leq \varphi(a), \varphi(b)$ there exists $c\in S$ such that $c\leq a,b$ and $d\leq \varphi(c)$.

\begin{definition} A {\em representation} of an inverse semigroup $S$ in a  Boolean inverse semigroup $T$ is a morphism of semigroups $\varphi\colon S\to T$ such that $\varphi|_{E(S)}\colon E(S) \to E(T)$ is a proper representation of the semilattice $E(S)$ in the Boolean algebra $E(T)$.
\end{definition}

A representation $\varphi\colon S\to T$ of an inverse semigroup $S$ in a Boolean inverse semigroup $T$ is called {\em proper}, if for any $t\in T$ there is $n\geq 1$ and $t_1,\dots, t_n\in T, s_1,\dots,s_n\in S$ such that $t=\bigvee_{i=1}^n t_i$ and $\varphi(s_i)\geq t_i$ for all $i=1,\dots, n$. 

\subsection{Stone groupoids} 

An introduction to \'etale groupoids can be found, e.g., in \cite{Renault, Paterson}.  If ${\mathcal{G}}$ is a groupoid, the unit space of ${\mathcal G}$ is denoted by ${\mathcal G}^{(0)}$, the set of composable pairs by ${\mathcal G}^{(2)}$, and the domain and range maps - by $d$ and $r$, respectively. 
A subset $A\subseteq {\mathcal G}$ is called a {\em local bisection} if the restrictions of the maps $d$ and $r$ to $A$ are injective.

A {\em topological groupoid} is a groupoid ${\mathcal G}$ with a topology with respect to which both the multiplication and the inversion maps are continuous. A topological groupoid ${\mathcal G}$  is {\em \'etale} if the map $d$ is a local homeomorphism. Because $d(x) = r(x^{-1})$ for all $x\in {\mathcal G}$, it follows that, in an \'etale groupoid, both of the maps $d$ and  $r$ are local homeomorphisms.

An \'etale groupoid ${\mathcal G}$ is called  a {\em Stone groupoid} if its unit space ${\mathcal G}^{(0)}$ is a  Stone space. 

The notion of a morphism between Stone groupoids that we will need is that of a relational covering morphism \cite{KL17} which we now recall.  

Let ${\mathcal G}_1$ and ${\mathcal G}_2$ be \'etale topological groupoids and  ${\mathcal P}({\mathcal G}_2)$ be the powerset of ${\mathcal G}_2$.
A {\em relational covering morphism} from ${\mathcal G}_1$ to ${\mathcal G}_2$ is a pair $(f,g)$ where $g\colon  {\mathcal G}_1^{(0)} \to {\mathcal G}_2^{(0)}$ is a proper continuous map and $f\colon {\mathcal G}_1 \to {\mathcal P}({\mathcal G}_2)$ is a function such that the following axioms hold:
\begin{enumerate}[(RM1)]
\item if  $y\in f(x)$ where $x\in {\mathcal G}_1$ then $d(y)=gd(x)$ and $r(y)=gr(x)$;
\item if $(x,y)\in {\mathcal G}_1^{(2)}$ and $(s,t)\in {\mathcal G}_2^{(2)}$ are such that $s\in f(x)$ and $t\in f(y)$ then $st\in f(xy)$;
\item if $d(x)=d(y)$ (or $r(x)=r(y)$) where $x,y\in {\mathcal G}_1$ and $f(x)\cap f(y)\neq \varnothing$ then $x=y$;
\item if $y=g(x)$ and $d(t)=y$ (resp. $\!\!\!$ $r(t)=y$) where $x\in {\mathcal G}_1^{(0)}$ and $t\in {\mathcal G}_2$ then there is $s\in {\mathcal G}_1$ such that $d(s)=x$ (resp. $\!\!\!$ $r(s)=x$) and $t\in f(s)$;
\item for any open set $A\subseteq {\mathcal G}_2$: $f^{-1}(A)=\{x\in {\mathcal G}_1\colon f(x)\cap A\neq \varnothing\}$ is an open set in ${\mathcal G}_1$;
\item  for any $t\in {\mathcal G}_1^{(0)}: g(t)\in f(t)$;
\item for any $x\in {\mathcal G}_1: f(x^{-1}) = (f(x))^{-1}$.
\end{enumerate}

Axioms (RM1) - (RM6) are taken from \cite[7.2]{KL17}, and the axiom (RM7) from \cite[7.4]{KL17}.
Axiom (RM2) can be looked at as a weak form of preservation of multiplication; (RM3) tells us that $f$ is {\em star-injective} and (RM4) that it is {\em  star-surjective}; (RM5) tells us that $f$ is a {\em lower-semicontinuous relation}. 
Note that a relational covering morphism $(f,g)$ is entirely determined by $f$ and we have the equality $g(a)=df(a)$ for any $a\in {\mathcal G}_1^{(0)}$, see \cite[7.2]{KL17}. A relational covering morphism $(f,g)$ can thus be denoted simply by $f$. 

\begin{definition} A relational covering morphism $f\colon {\mathcal G}_1 \to {\mathcal P}({\mathcal G}_2)$ is called 
\begin{itemize}
\item {\em at least single valued}, if $|f(a)|\geq 1$ for all $a\in {\mathcal G}_1$;
\item   {\em at most single valued} if $|f(a)|\leq 1$ for all $a\in {\mathcal G}_1$;
\item {\em single valued} or a {\em covering functor} if $|f(a)|=1$ for all $a\in {\mathcal G}_1$.
\end{itemize}
\end{definition} 

\subsection{The universal groupoid of an inverse semigroup}\label{subs:universal} Let $S$ be an inverse semigroup. There is an action of $S$ on $\widehat{E(S)}$, called the {\em natural action}, defined as follows. The domain of action of $s\in S$ is the set $\{\varphi\in \widehat{E(S)}\colon \varphi({\bf d}(s))=1\}$, and for $\varphi$ from this domain we have $(s\cdot \varphi) (e) = \varphi(s^{-1}es)$. Let $\varphi \in \widehat{E(S)}$ and $s,t\in S$ be such that $
\varphi({\bf d}(s)) = \varphi({\bf d}(t)) =1$. We define $(s,\varphi) \sim (t,\varphi)$ if and only if there is $e \in E(S)$ such that $\varphi(e)=1$ and $se=te$. If $(s,\varphi) \sim (t,\varphi)$ we say that $s$ and $t$ define the same {\em germ} of the natural action of $S$ on $\widehat{E(S)}$.  We look at the germ $[s,\varphi]$ as at an arrow from $\varphi$ to $s\cdot \varphi$. This leads to the {\em groupoid of germs} ${\mathcal G}(S)$ of the natural action of $S$ on $\widehat{E(S)}$. The {\em patch topology} on ${\mathcal G}(S)$ has a basis consisting of the sets 
$$
\Theta[s] = \{[s,\varphi]\in {\mathcal G}(S)\colon \varphi({\bf d}(s))=1\}
$$
where $s\in S$ and
$$
\Theta[s; s_1,\dots s_n] = \{[s,\varphi]\in {\mathcal G}(S)\colon \varphi({\bf d}(s))=1, \varphi({\bf d}(s_1))=\dots = \varphi({\bf d}(s_n))=0\},
$$
where $n\geq 1$, $s, s_1,\dots, s_n \in S$ and $s_1,\dots, s_n\leq s$. Endowed with the patch topology, ${\mathcal{G}}(S)$ is a Stone groupoid known as {\em Paterson's  universal groupoid} of $S$ \cite{Paterson, Exel:comb}. The set of all compact-open bisections of ${\mathcal{G}}(S)$ forms a Boolean inverse semigroup, ${\mathrm B}(S)$, called the {\em Booleanization} of~$S$. The map $\iota_{{\mathrm B}(S)}\colon S\to {\mathrm B}(S)$, $s\mapsto \Theta[s]$, is a proper representation of $S$ in ${\mathrm B}(S)$. Moreover, every element of ${\mathrm B}(S)$ is a compatible join of the elements of the form $\iota_{{\mathrm B}(S)}(s)e$ where $s\in S$ and $e\in {\mathrm B}(E(S))$. 

\subsection{The tight groupoid of an inverse semigroup} Let $S$ be an inverse semigroup. 

\begin{definition} A representation $\varphi$ of an inverse semigroup $S$ in a 
Boolean inverse semigroup $T$ is called {\em cover-to-join} if $\varphi|_{E(S)}\colon E(S) \to E(T)$ is a cover-to-join representation of the semilattice $E(S)$ in the Boolean algebra $E(T)$.
\end{definition}

\begin{definition} A subset ${\mathcal X}\subseteq \widehat{E(S)}$ is {\em invariant} under the natural action of $S$ if $s\cdot \varphi \in {\mathcal X}$ for every $s\in S$ and $\varphi\in {\mathcal X}$ such that $\varphi({\bf d}(s))=1$.
\end{definition} 

By  virtue of \cite[Proposition 12.8]{Exel:comb}, $\widehat{E(S)}_{tight}$ is an invariant subset of $\widehat{E(S)}$ under the natural action of $S$. In addition, it is a closed subset of  $\widehat{E(S)}$. One can thus restrict the natural action of $S$ on $\widehat{E(S)}$ to $\widehat{E(S)}_{tight}$. The groupoid of germs of this restricted action is denoted by ${\mathcal G}_{tight}(S)$. It is known as {\em Exel's  tight groupoid of $S$}. The set of all compact-open bisections of ${\mathcal{G}}_{tight}(S)$ forms a Boolean inverse semigroup denoted ${\mathrm B}_{tight}(S)$ and called the {\em tight Booleanization} of $S$. The map $\iota_{{\mathrm B}_{tight}(S)}\colon S\to {\mathrm B}_{tight}(S)$, $s\mapsto \Theta[s]\cap {\mathcal G}_{tight}(S)$, is a proper tight representation of $S$ in ${\mathrm B}_{tight}(S)$.

\subsection{Stone duality for Boolean inverse semigroups}\label{subs:duality_distr_inverse}

If $S$ is a Boolean inverse semigroup, the map $\iota_{{\mathrm B}(S)}$ is  a natural isomorphism between $S$ and ${\mathrm B}(S)$.
Given a Stone groupoid ${\mathcal G}$, the set of its compact-open bisections forms a Boolean inverse semigroup, denoted ${\sf S}({\mathcal G})$, whose Stone groupoid is naturally isomorphic to ${\mathcal G}$.

Let $\psi\colon S\to T$ be a morphism between Boolean inverse semigroups, and let $[t,\varphi]\in {\mathcal G}_{tight}(T)$. Then $(\varphi\psi)|_{E(S)}\in \widehat{E(S)}_{tight}$ and we put 
$\overline{\psi}[t,\varphi] = \{[s, (\varphi\psi)|_{E(S)}]\colon \psi(s)=t\}$.
This defines a relational covering morphism from ${\mathcal{G}}_{tight}(T)$ to ${\mathcal{G}}_{tight}(S)$. 

In the reverse direction, if $f\colon {\mathcal G}_1\to {\mathcal P}({\mathcal G}_2)$ is a relational covering morphism between Stone groupoids, for any $U\in {\mathrm S}({\mathcal G}_2)$ we put $\overline{f}(U) = \{[s,\varphi]\in {\mathcal G}_1\colon f([s,\varphi])\cap U\neq \varnothing\}$ which is a compact-open local bisection of ${\mathcal G}_1$. This defines a morphism ${\mathrm S}({\mathcal G}_2)\to {\mathrm S}({\mathcal G}_1)$.

The  assignments described give rise to a pair of functors which establish a duality between the category of Boolean inverse semigroups and the category of Stone groupoids. Varying morphisms, one in fact obtains several dualities. To formulate them, we define morphisms of type 1, 2, 3 and 4 on the algebraic and on the topological side as it is presented in the  table below.

\vspace{0.3cm}

\begin{tabular}{|c|c|c|}
\hline
 &  morphisms between semigroups & morphisms between groupoids  \\
 \hline
type 1 &  morphisms & relational covering morphisms    \\ \hline
type 2&     proper moprhisms  & at least single valued  \\
& & relational covering morphisms \\ \hline
type 3& weakly meet-preserving & at most single valued  \\
&  moprhisms & relational covering morphisms \\ \hline
type 4 & proper and weakly meet preserving & continuous covering functors \\
& morphisms & \\ \hline
\end{tabular}
\vspace{0.3cm}

The following result \cite[Theorem 8.20]{KL17} upgrades morphisms in earlier versions due to Lawson \cite[Theorem 1.4]{Lawson12} and Lawson and Lenz \cite[Theorem 3.25]{LL13}.

\begin{theorem}\label{th:dual_distr} 
For each $k=1, 2, 3, 4$ the category of Boolean inverse semigroups with morphisms of type $k$ is dually equivalent to the category of Stone groupoids with morphisms of type $k$.
\end{theorem}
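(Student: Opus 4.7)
The plan is to leverage the object-level material already assembled in the excerpt---the constructions $S \mapsto \mathcal{G}(S)$ and $\mathcal{G} \mapsto {\mathrm S}(\mathcal{G})$ together with the natural isomorphisms $S \cong {\mathrm S}(\mathcal{G}(S))$ via $\iota_{\mathrm{B}(S)}$ and $\mathcal{G} \cong \mathcal{G}({\mathrm S}(\mathcal{G}))$---and reduce the theorem to establishing a bijection on hom-sets together with the matching of the four morphism types. First I would verify that the two assignments $\psi \mapsto \overline{\psi}$ with $\overline{\psi}[t,\varphi] = \{[s,(\varphi\psi)|_{E(S)}] : \psi(s)=t\}$, and $f \mapsto \overline{f}$ with $\overline{f}(U) = \{x : f(x) \cap U \neq \varnothing\}$, are well-defined, functorial, and mutually inverse on hom-sets. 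That $\overline{\psi}$ satisfies axioms (RM1)--(RM7) is routine: (RM1), (RM2), (RM6), (RM7) follow from $\psi$ being a semigroup homomorphism; (RM3) follows from the germ identification $se = s'e$; (RM4) uses properness of $\psi|_{E(S)}$ applied to units; (RM5) is a topological check against the patch basis $\Theta[s;s_1,\dots,s_n]$. That $\overline{f}(U)$ is a compact-open bisection uses lower semicontinuity together with star-injectivity and star-surjectivity. The round-trip identities on hom-sets follow from the fact, recorded in the excerpt, that every element of ${\mathrm S}(\mathcal{G}(S))$ is a compatible join of elements of the form $\iota_{\mathrm{B}(S)}(s)e$.

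With the type 1 duality in hand, I would then match the remaining three types. For type 2, a morphism $\psi$ is proper precisely when every compact-open bisection of $\mathcal{G}(T)$ is covered by images $\iota_{\mathrm{B}(T)}\psi(s_i)$, which at the germ level says that every germ of $\mathcal{G}(T)$ has at least one preimage under $\overline{\psi}$, i.e.\ $\overline{\psi}$ is at least single valued. For type 4, combining properness with weak meet-preservation forces $\overline{\psi}$ to be simultaneously at least and at most single valued, yielding a genuine continuous covering functor. For type 3, the claim is that weak meet-preservation of $\psi$ is equivalent to $\overline{\psi}$ being at most single valued: given two preimage germs $[s,\chi], [s',\chi] \in \overline{\psi}^{-1}([t,\varphi])$, weak meet-preservation applied to $\psi(s), \psi(s') \geq t$ near $\varphi$ produces some $s'' \leq s,s'$ whose image still witnesses $[t,\varphi]$ on a neighborhood of $\varphi$, forcing $[s,\chi] = [s',\chi]$.

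The main obstacle will be the type 3 matching, since it converts a purely algebraic condition---weak meet-preservation, quantifying over all $d \leq \psi(a), \psi(b)$ in the target---into a local geometric condition about star-injectivity of the germ relation. The translation requires combining the germ equivalence, the compatibility of $\psi$ with the natural partial order, and the fact that compact-open bisections of the form $\iota_{\mathrm{B}(S)}(s)e$ form a basis for the topology of ${\mathrm S}(\mathcal{G}(S))$; one must check carefully that the witness $s''$ produced by weak meet-preservation for a particular test element $d$ can be refined so that $\psi(s'')$ dominates a common refinement of $t$ on a suitable neighborhood of $\varphi$. All other verifications are straightforward topological or algebraic checks against the patch basis.
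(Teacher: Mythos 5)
There is nothing in the paper to compare your argument against: Theorem~\ref{th:dual_distr} is not proved here but imported verbatim from \cite[Theorem 8.20]{KL17}, with the paper only recording the object assignments and the two hom-set maps $\psi\mapsto\overline{\psi}$ and $f\mapsto\overline{f}$ in the subsection on Stone duality for Boolean inverse semigroups. Your plan is a faithful reconstruction of the architecture of the cited proof: establish the type~1 duality via those two assignments and the unit isomorphisms, then match the remaining three morphism types, with types~2 and~4 following from type~3 and properness. A few points to tighten. First, for a Boolean inverse semigroup $S$ the dual groupoid is the germ groupoid over the \emph{ultrafilter} (tight) spectrum, i.e.\ ${\mathcal G}_{tight}(S)$, not the universal groupoid ${\mathcal G}(S)$ over all characters; the paper's own phrasing ``$\iota_{{\mathrm B}(S)}$ is a natural isomorphism between $S$ and ${\mathrm B}(S)$'' is loose on this point and you have inherited the slip --- already for a four-element Boolean algebra $B$ one has ${\mathrm B}(B)\not\cong B$. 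Second, your attributions of (RM4) and (RM6) should be swapped: star-surjectivity is immediate by taking $t=\psi(s)$, whereas it is (RM6), $g(t)\in f(t)$, that needs properness of $\psi|_{E(S)}$ (compare the verification inside the proof of Theorem~\ref{th:booleanization2}). Third, the forward direction of your type~3 matching is easier than you suggest: if $\psi(s)=\psi(s')=t$ and $\varphi({\mathbf d}(t))=1$, weak meet-preservation gives $c\leq s,s'$ with $t\leq\psi(c)$, whence $s{\mathbf d}(c)=c=s'{\mathbf d}(c)$ and $(\varphi\psi)({\mathbf d}(c))\geq\varphi({\mathbf d}(t))=1$, so the two germs coincide outright --- no refinement over a neighbourhood is needed. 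What genuinely requires care, and what your plan omits, are the \emph{converse} implications for each type (e.g.\ that an at most single valued $f$ yields a weakly meet-preserving $\overline{f}$), which use compactness of $M_{{\mathbf d}(d)}$ in the ultrafilter space together with the existence of finite joins of idempotents in a Boolean inverse semigroup; without these the four subcategory equivalences are only half established.
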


\section{$X$-to-join representations of semilattices and inverse semigroups}\label{sec:X}

\subsection{$X$-to-join representations of semilattices}\label{subs:X_semilattices}
For a set $E$ by ${\mathcal P}_{fin}(E)$ we denote the set of all finite subsets of $E$.

\begin{definition}  Let $E$ be a semilattice, $B$  a Boolean algebra and $X\subseteq E\times {\mathcal P}_{fin}(E)$. 
 A representation $\varphi\colon E\to B$ will be called  {\em $X$-to-join}, if it is proper and
\begin{equation}\label{eq:j28a}
\varphi(e) = \bigvee_{i=1}^n \varphi(e_i)
\end{equation}
for all $(e, \{e_1,\dots, e_n\})\in X$.
\end{definition}

We denote the set of all $X$-to-join characters of $E$ by  $\widehat{E}_X$.
A standard argument (to be found, e.g., in \cite[Lemma~34.1]{GH})  ensures that $\widehat{E}_X$ is a closed subset of $\widehat{E}$. In the subspace topology, it is a Stone space. 
Consequently, the set of compact-opens of the space  $\widehat{E}_{X}$ forms a Boolean algebra which we denote by ${\mathrm B}_{X}(E)$ and call the {\em $X$-to-join Booleanization} of $E$. The map $\iota_{{\mathrm B}_{X}(E)}\colon E\to {\mathrm B}_{X}(E)$ given by $a\mapsto M_a\cap \widehat{E}_{X}$ is an $X$-to-join representation of $E$ in ${\mathrm B}_{X}(E)$ and, moreover,  $\iota_{{\mathrm B}_{X}(E)}(E)$ generates ${\mathrm B}_{X}(E)$ as a Boolean algebra. The space $\widehat{E}_{X}$ is the dual space of the Boolean algebra ${\mathrm B}_{X}(E)$ by means of the Stone duality.

\begin{example}\mbox{} 
\begin{enumerate}
\item If $X=\varnothing$ then an $\varnothing$-to-join representation of $E$ is obviously just a proper representation of $E$.
\item If $X$ consists of all $(e,\{e_1,\dots, e_n\})$ where $\{e_1,\dots, e_n\}$ is a cover of $e$, $X$-to-join representations coincide with  cover-to-join representations, and thus with proper tight representations.
\end{enumerate}
\end{example}

The following result provides a universal property of ${\mathrm B}_X(E)$. 

\begin{theorem} \label{th:booleanization1} Let $E$ be a semilattice, $B$ a Boolean algebra and $\varphi\colon E\to B$  an $X$-to-join representation. Then there is a unique morphism of Boolean algebras $\psi\colon {\mathrm B}_X(E)\to B$ such that $\varphi=\psi\iota_{{\mathrm B}_X(E)}$.
\end{theorem}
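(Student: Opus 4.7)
The plan is to apply Stone duality for Boolean algebras to an appropriate dual map of Stone spaces. Let $\widehat{B}_{tight}$ denote the Stone space of ultra characters of $B$ and define
$$\alpha\colon\widehat{B}_{tight}\to\widehat{E}_X,\qquad \chi\mapsto\chi\circ\varphi.$$
The required morphism $\psi$ will be obtained as the morphism of Boolean algebras dual to $\alpha$ under Stone duality.

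First I would verify that $\alpha$ lands in $\widehat{E}_X$. The composite $\chi\varphi$ preserves $0$ and $\wedge$, since both $\chi$ and $\varphi$ do, and it is an $X$-to-join map: for $(e,\{e_1,\dots,e_n\})\in X$, applying $\chi$ to the identity $\varphi(e)=\bigvee_{i=1}^n\varphi(e_i)$ yields $\chi\varphi(e)=\bigvee_{i=1}^n\chi\varphi(e_i)$, since $\chi$ preserves finite joins. Non-zeroness of $\chi\varphi$ follows from properness of $\varphi$: any $b\in B$ with $\chi(b)=1$ is dominated by a finite join $\bigvee_{i}\varphi(x_i)$, whence $\chi\varphi(x_i)=1$ for some $i$. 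Next I would check that $\alpha$ is continuous and proper using the computation
$$\alpha^{-1}(M_a\cap\widehat{E}_X)=\{\chi\in\widehat{B}_{tight}\colon\chi(\varphi(a))=1\},$$
whose right-hand side is precisely the compact-open subset of $\widehat{B}_{tight}$ corresponding to $\varphi(a)\in B$ under Stone duality. Since the sets $M_a\cap\widehat{E}_X$ together with their relative complements generate both the topology and the Boolean algebra of compact-opens of $\widehat{E}_X$, $\alpha$-preimages of all compact-opens of $\widehat{E}_X$ are compact-opens of $\widehat{B}_{tight}$, giving both continuity and properness.

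By Stone duality for Boolean algebras, $\alpha$ dualizes to a morphism $\psi\colon\mathrm{B}_X(E)\to B$ of Boolean algebras sending a compact-open $K\subseteq\widehat{E}_X$ to the element of $B$ corresponding to $\alpha^{-1}(K)\subseteq\widehat{B}_{tight}$. On the generator $\iota_{\mathrm{B}_X(E)}(a)=M_a\cap\widehat{E}_X$ this gives $\psi\iota_{\mathrm{B}_X(E)}(a)=\varphi(a)$, so $\psi\iota_{\mathrm{B}_X(E)}=\varphi$. Uniqueness is automatic because $\iota_{\mathrm{B}_X(E)}(E)$ generates $\mathrm{B}_X(E)$ as a Boolean algebra, and two morphisms of Boolean algebras that agree on a generating set must coincide. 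The only delicate step is the first one, where properness of $\varphi$ is crucial to exclude the degenerate case $\chi\varphi\equiv 0$ and thus guarantee that $\alpha$ is well-defined; everything else is routine bookkeeping around the Stone correspondence recalled earlier in the paper.
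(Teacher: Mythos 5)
Your proposal is correct and follows essentially the same route as the paper: both construct the dual continuous proper map $\chi\mapsto\chi\varphi$ from $\widehat{B}_{tight}$ to $\widehat{E}_X$ (using properness of $\varphi$ to ensure $\chi\varphi\neq 0$ and the join-preservation of ultra characters to get the $X$-to-join condition), then invoke Stone duality and derive uniqueness from the fact that $\iota_{{\mathrm B}_X(E)}(E)$ generates ${\mathrm B}_X(E)$. The only cosmetic difference is that the paper computes preimages of the full basic sets $M_{a;b_1,\dots,b_k}$ directly, while you compute them only for the $M_a$ and appeal to generation, which is equally valid.
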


\begin{proof} Let $\widehat{B} = \widehat{B}_{tight}$ be the dual Stone space of the Boolean algebra $B$. Then $B$ is isomorphic to the Boolean algebra of compact-opens of $\widehat{B}$ via the map $b\mapsto M_b \cap \widehat{B}$. Applying the Stone duality, the existence of $\psi$ is equivalent to the  existence of a continuous proper map $\gamma \colon \widehat{B}\to \widehat{E}_{X}$ such that $\varphi=\gamma^{-1}\iota_{B_X(E)}$. Let $\alpha \in \widehat{B}$. Then $\alpha\varphi \in \widehat{E}_X$ (note that $\alpha\varphi$ is non-zero because $\varphi$ is proper) and we put $\gamma(\alpha) = \alpha\varphi$. We show that $\gamma$ is a proper continuous map, that is, $\gamma^{-1}$ takes compact-opens to compact-opens. It is enough to restrict attention to basic compact-opens. For any $\alpha \in \widehat{B}$ and $a, b_1, \dots, b_k\in E$ (where $k\geq 0$) such that $b_1,\dots, b_k\leq a$ we have: 
\begin{align*}
\alpha \in \gamma^{-1}(M_{a;b_1,\dots, b_k}\cap \widehat{E}_X) & \Leftrightarrow   \gamma(\alpha)\in M_{a;b_1,\dots, b_k}\cap \widehat{E}_X   \\
& \Leftrightarrow  \alpha\varphi \in M_{a;b_1,\dots, b_k}  \cap \widehat{E}_X\\
& \Leftrightarrow   \alpha\varphi(a) = 1, \alpha\varphi(b_1) = \dots =  \alpha\varphi(b_k)= 0\\
& \Leftrightarrow   \alpha \in M_{\varphi(a);\varphi(b_1), \dots, \varphi(b_k)} \cap \widehat{B}.
\end{align*}
It follows that $\gamma^{-1}(M_{a;b_1,\dots, b_k}\cap \widehat{E}_X) = M_{\varphi(a)\setminus (\varphi(b_1)\vee\dots \vee\varphi(b_k))} \cap \widehat{B}$, which is a compact-open. Therefore, the required morphism of Boolean algebras $\psi\colon {\mathrm B}_X(E)\to B$ exists and satisfies $\psi(M_{a;b_1,\dots, b_k}\cap \widehat{E}_X) = M_{\varphi(a)\setminus (\varphi(b_1)\vee\dots \vee\varphi(b_k))}  \cap \widehat{B}$ (here $B$ is identified with the Boolean algebra of compact-opens of $\widehat{B}$). That $\psi$ is unique is easy to show using the fact that the image of $\iota_{{\mathrm B}_X(E)}$ generates ${\mathrm B}_X(E)$.
\end{proof}

\subsection{Connection with $\pi$-tight representations}\label{subs:pitight}

Let $\pi\colon E\to B$ be a representation of a semilattice $E$ in a Boolean algebra $B$. Define $X_{\pi}$ as the set consisting of all $(e, \{e_1,\dots, e_n\}) \in E\times {\mathcal P}_{fin}(E)$ such that $\pi(e) = \bigvee_{i=1}^n \pi(e_i)$. Then $X_{\pi}$-to-join representations of $E$ coincide with $\pi$-tight representations considered in~\cite{ES18}.
The next result was proved in  \cite[Theorem 15.11]{ES18}.

\begin{lemma}\label{lem:character}
Let $\pi\colon E\to B$ be a representation of a semilattice $E$ in a Boolean algebra $B$ such that $\pi(E)$ generates $B$ and let $\varphi$ be an $X_{\pi}$-to-join character of $E$. Then
there is an ultra character, $\psi$, of $B$ such that $\varphi = \psi\pi$.
\end{lemma}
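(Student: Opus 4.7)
The plan is to construct the desired ultra character $\psi$ by exhibiting the ultrafilter $U=\psi^{-1}(1)\subseteq B$ directly. Let $F$ be the filter generated by $\pi(\varphi^{-1}(1))$, i.e.
\[
F=\{\,b\in B:\exists\,e\in E,\ \varphi(e)=1\text{ and }\pi(e)\le b\,\},
\]
and let $I$ be the ideal generated by $\pi(\varphi^{-1}(0))$, i.e.\ all $b\in B$ with $b\le\pi(e_{1})\vee\cdots\vee\pi(e_{k})$ for some $e_{1},\dots,e_{k}\in E$ satisfying $\varphi(e_{i})=0$. The strategy is to verify that $F$ is a proper filter disjoint from $I$ and then invoke the Boolean prime filter theorem to obtain an ultrafilter $U\supseteq F$ with $U\cap I=\varnothing$; the characteristic function $\psi=\chi_{U}$ will be the required ultra character.

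First I would check that $F$ is a filter: it is upward closed, and it is closed under meets because $\pi(e)\wedge\pi(e')=\pi(e\wedge e')$ and $\varphi(e)\wedge\varphi(e')=\varphi(e\wedge e')$. It is non-empty because $\varphi$ is a character (hence non-zero). The only subtlety is that $0\notin F$: if $0\in F$ then $\pi(e)=0$ for some $e$ with $\varphi(e)=1$, but $\pi(e)=0$ is the empty join in $B$, so $(e,\varnothing)\in X_{\pi}$ and the $X_{\pi}$-to-join property forces $\varphi(e)=0$, a contradiction.

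The main step is to show $F\cap I=\varnothing$, and this is where the $X_{\pi}$-to-join hypothesis does the real work. Assume, for contradiction, that some $b$ lies in both; then there exist $e,e_{1},\dots,e_{k}\in E$ with $\varphi(e)=1$, $\varphi(e_{i})=0$ for all $i$, and $\pi(e)\le\pi(e_{1})\vee\cdots\vee\pi(e_{k})$. Meeting with $\pi(e)$ and using that $\pi$ preserves meets together with distributivity in $B$ gives
\[
\pi(e)=\bigvee_{i=1}^{k}\pi(e\wedge e_{i}).
\]
Thus $(e,\{e\wedge e_{1},\dots,e\wedge e_{k}\})\in X_{\pi}$, and since $\varphi$ is $X_{\pi}$-to-join and preserves meets, $\varphi(e)=\bigvee_{i}\varphi(e)\wedge\varphi(e_{i})=0$, contradicting $\varphi(e)=1$.

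Once $F\cap I=\varnothing$ is established, the Boolean prime filter theorem yields an ultrafilter $U$ of $B$ with $F\subseteq U$ and $U\cap I=\varnothing$. Put $\psi=\chi_{U}$, an ultra character of $B$. To verify $\psi\pi=\varphi$: for each $e\in E$, if $\varphi(e)=1$ then $\pi(e)\in F\subseteq U$, so $\psi\pi(e)=1$; if $\varphi(e)=0$ then $\pi(e)\in I$, hence $\pi(e)\notin U$, so $\psi\pi(e)=0$. The main obstacle, as indicated above, is the disjointness of $F$ and $I$; the rest of the argument consists of the prime-filter-theorem step and routine verifications.
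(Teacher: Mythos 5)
Your argument is correct and complete. Note that the paper itself does not prove Lemma~\ref{lem:character}: it is quoted from Exel--Steinberg \cite[Theorem 15.11]{ES18}, where it is obtained inside a longer analysis of $\pi$-tight representations. Your filter-theoretic route is a clean, self-contained alternative: the whole content is concentrated in the disjointness $F\cap I=\varnothing$, and the computation
$\pi(e)=\bigvee_{i}\pi(e\wedge e_i)$, hence $(e,\{e\wedge e_1,\dots,e\wedge e_k\})\in X_{\pi}$, is exactly the right way to feed the $X_{\pi}$-to-join hypothesis into the separation argument. Two small points are worth making explicit. First, since the paper's Boolean algebras are generalized (not necessarily unital), you should say why the prime filter produced by Stone's separation theorem is an ultrafilter: if a proper filter $Q$ strictly contained a prime filter $P$ and $b\in Q\setminus P$, then for $a\in P$ primeness applied to $a=(a\wedge b)\vee(a\setminus b)$ forces $a\wedge b\in P$ (the other case puts $0=(a\setminus b)\wedge b$ into $Q$), whence $b\in P$, a contradiction; so prime filters are maximal. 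Second, your treatment of $0\notin F$ via the empty join is fine, but it is safer to use the pair $(e,\{0\})\in X_{\pi}$ (the semilattices here have a zero and $\pi(0)=0$), which avoids any quibble about whether $\varnothing$ is an admissible second coordinate. Finally, observe that your proof never uses the hypothesis that $\pi(E)$ generates $B$; that hypothesis is needed elsewhere (e.g.\ for the injectivity/surjectivity bookkeeping in Theorem~\ref{th:isom}), so its absence here is not a defect.
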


The following is a reformulation of a result from \cite{ES18}. Its proof follows from the proof of \cite[Theorem 15.11]{ES18}. We provide another proof.

\begin{theorem} \label{th:isom} Let $\pi\colon E\to B$ be a representation of a semilattice $E$ in a 
 Boolean algebra $B$ such that $\pi(E)$ generates $B$ as a  Boolean algebra. Then $B$ is isomorphic to ${\mathrm B}_{X_{\pi}}(E)$.
\end{theorem}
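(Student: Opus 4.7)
The plan is to deduce this from Theorem \ref{th:booleanization1} together with Lemma \ref{lem:character} via Stone duality. First, observe that $\pi$ is tautologically an $X_\pi$-to-join representation, since for every pair $(e,\{e_1,\dots,e_n\})\in X_\pi$ the relation $\pi(e)=\bigvee_{i=1}^n \pi(e_i)$ holds by definition of $X_\pi$. (Properness of $\pi$ is also automatic: $\pi(E)$ generates $B$ as a Boolean algebra, so any $b\in B$ lies below a finite join of elements of $\pi(E)$.) Therefore Theorem \ref{th:booleanization1} yields a unique morphism of Boolean algebras $\psi\colon {\mathrm B}_{X_\pi}(E)\to B$ with $\pi=\psi\,\iota_{{\mathrm B}_{X_\pi}(E)}$, and we aim to show that $\psi$ is an isomorphism.

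By Stone duality for Boolean algebras it suffices to show that the dual map $\gamma\colon \widehat{B}\to \widehat{E}_{X_\pi}$, $\alpha\mapsto \alpha\pi$, exhibited in the proof of Theorem \ref{th:booleanization1}, is a homeomorphism. That $\gamma$ is well-defined, continuous and proper is already established there. Surjectivity of $\gamma$ is precisely the content of Lemma \ref{lem:character}: every $X_\pi$-to-join character $\varphi$ of $E$ factors as $\varphi=\psi'\pi$ for some ultra character $\psi'$ of $B$, i.e.\ $\varphi=\gamma(\psi')$. For injectivity, suppose $\alpha_1,\alpha_2\in \widehat{B}$ satisfy $\alpha_1\pi=\alpha_2\pi$; then $\alpha_1$ and $\alpha_2$ agree on the generating set $\pi(E)$ of $B$, and since both are Boolean-algebra morphisms into $\{0,1\}$ (preserving $0,\wedge,\vee,\setminus$), they must agree on the Boolean subalgebra generated by $\pi(E)$, which is all of $B$; hence $\alpha_1=\alpha_2$.

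At this point $\gamma$ is a continuous proper bijection between Stone spaces. Since a proper continuous map between locally compact Hausdorff spaces is closed, $\gamma$ is a homeomorphism. Dualizing back, $\psi$ is an isomorphism of Boolean algebras, which is the desired conclusion.

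I do not expect any real obstacle: the surjectivity of $\gamma$ is the only non-trivial input and it is delivered by the cited Lemma \ref{lem:character}, while injectivity and the passage from bijection to homeomorphism are formal. The only point that deserves a line of care is the verification that $\pi$ itself is an $X_\pi$-to-join representation — but this is immediate from the way $X_\pi$ was defined.
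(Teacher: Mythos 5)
Your proposal is correct and follows essentially the same route as the paper: obtain $\psi$ from Theorem \ref{th:booleanization1}, use Lemma \ref{lem:character} to get bijectivity of the dual map $\alpha\mapsto\alpha\pi$, and conclude by Stone duality. The only cosmetic difference is that you prove injectivity of the dual map directly from the fact that characters agreeing on a generating set coincide, whereas the paper deduces it by dualizing the surjectivity of $\psi$; your explicit check that $\pi$ is proper is a welcome extra line of care.
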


\begin{proof} Since $\pi$ is $X_{\pi}$-to-join, Theorem \ref{th:booleanization1} implies that there is a morphism of Boolean algebras $\psi\colon {\mathrm B}_{X_{\pi}}(E)\to B$ such that $\pi=\psi\iota_{{\mathrm B}_{X_{\pi}}(E)}$. Moreover, since $\pi(E)$ generates $B$, we have that $\psi$ is surjective. Thus, the Stone duality yields that its dual proper continuous map $\widehat{B}_{tight} \to \widehat{E}_{X_{\pi}}$ given by $\alpha\mapsto \alpha\pi$ is injective. Lemma \ref{lem:character} implies that this map is in fact bijective, which, invoking the Stone duality, shows that $\psi$ is an isomorphism. 
\end{proof}

\subsection{$X$-to-join representations of inverse semigroups}

Let $S$ be an inverse semigroup and $T$ a Boolean inverse semigroup.

\begin{definition} Let $X\subseteq E(S)\times {\mathcal P}_{fin}(E(S))$.  A representation $\varphi\colon S\to T$ will be called {\em $X$-to-join} if $\varphi|_{E(S)}\colon E(S)\to E(T)$ is an $X$-to-join representation of the semilattice $E(S)$ in the Boolean algebra $E(T)$. 
\end{definition}

\begin{definition} Let $X\subseteq E(S)\times {\mathcal P}_{fin}(E(S))$. We will say that $X$ is {\em $S$-invariant}, or simply {\em invariant}, if $(e, \{e_1,\dots, e_n\})\in X$ implies that $(s^{-1}es, \{s^{-1}e_1s,\dots, s^{-1}e_ns\})\in X$, for all $s\in S$. 
\end{definition}

By $X'$ we denote the smallest $S$-invariant subset of $E(S)\times {\mathcal P}_{fin}(E(S))$ that contains $X$. The set $X'$ exists because $E(S)\times {\mathcal P}_{fin}(E(S))$ is $S$-invariant, and the intersection of any family of $S$-invariant subsets of $E(S)\times {\mathcal P}_{fin}(E(S))$ is $S$-invariant as well. It follows that $X'$ is the intersection of all $S$-invariant subsets of $E(S)\times {\mathcal P}_{fin}(E(S))$ which contain $X$.

\begin{lemma} Let $S$ be an inverse semigroup, $X\subseteq E(S)\times {\mathcal P}_{fin}(E(S))$ and $T$ a Boolean inverse semigroup. A representation $\varphi\colon S\to T$ is an $X$-to-join representation if and only if it is an $X'$-to-join representation.
\end{lemma}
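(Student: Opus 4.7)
The forward implication is immediate from $X \subseteq X'$, so the entire content lies in showing that an $X$-to-join representation is automatically $X'$-to-join. The plan is to identify the set
\[
Y = \bigl\{(e, \{e_1,\dots,e_n\}) \in E(S)\times {\mathcal P}_{fin}(E(S)) \colon \varphi(e) = \varphi(e_1)\vee\dots\vee\varphi(e_n)\bigr\},
\]
observe that $X \subseteq Y$ by hypothesis, and show $Y$ is $S$-invariant. By minimality of $X'$ this will give $X' \subseteq Y$, which is exactly the $X'$-to-join condition.

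The key computation is invariance of $Y$. Fix $(e,\{e_1,\dots,e_n\})\in Y$ and $s\in S$. Since $\varphi$ is a semigroup morphism, $\varphi(s^{-1}es)=\varphi(s)^{-1}\varphi(e)\varphi(s)$, and likewise for each $e_i$. The idempotents $\varphi(e_1),\dots,\varphi(e_n)$ are pairwise compatible (any two idempotents in an inverse semigroup are compatible), and their join $\varphi(e)=\bigvee_i\varphi(e_i)$ exists in $T$. Multiplication in a Boolean inverse semigroup distributes over finite compatible joins, so
\[
\varphi(s)^{-1}\varphi(e)\varphi(s) \;=\; \varphi(s)^{-1}\Bigl(\bigvee_{i=1}^n \varphi(e_i)\Bigr)\varphi(s) \;=\; \bigvee_{i=1}^n \varphi(s)^{-1}\varphi(e_i)\varphi(s) \;=\; \bigvee_{i=1}^n \varphi(s^{-1}e_is).
\]
Thus $(s^{-1}es,\{s^{-1}e_1s,\dots,s^{-1}e_ns\})\in Y$, showing $Y$ is $S$-invariant.

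Since $Y$ is an $S$-invariant subset of $E(S)\times{\mathcal P}_{fin}(E(S))$ containing $X$, and $X'$ is by definition the smallest such set, we conclude $X' \subseteq Y$. This means $\varphi(e)=\bigvee_{i=1}^n\varphi(e_i)$ whenever $(e,\{e_1,\dots,e_n\})\in X'$, so $\varphi$ is $X'$-to-join.

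There is no real obstacle here; the only thing to be careful about is invoking the correct axiom of a Boolean inverse semigroup, namely that left and right multiplication distribute over compatible finite joins of idempotents, which follows from the definition recalled in Subsection~\ref{subs:distr_bool}. Compatibility of the family $\{\varphi(e_i)\}$ is automatic because they are idempotents, so no further hypothesis is needed.
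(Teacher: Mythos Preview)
Your proof is correct and follows essentially the same approach as the paper: both hinge on the computation $\varphi(s^{-1}es)=\varphi(s)^{-1}\bigl(\bigvee_i\varphi(e_i)\bigr)\varphi(s)=\bigvee_i\varphi(s^{-1}e_is)$ via distributivity of multiplication over compatible joins. Your packaging via the invariant set $Y$ and minimality of $X'$ is slightly cleaner than the paper's direct check on conjugates of elements of $X$, but the substance is identical.
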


\begin{proof} Let $(e, \{e_1,\dots, e_n\})\in X$ and $s\in S$. It  suffices to show that $\varphi(s^{-1}es) = \bigvee_{i=1}^n \varphi(s^{-1}e_is)$. This follows applying the fact that in a Boolean inverse semigroup the multiplication distributes over finite compatible joins: $\varphi(s^{-1}es)$ is equal to
\begin{equation*}
\varphi(s^{-1})\varphi(e)\varphi(s) = \varphi(s^{-1})\left(\bigvee_{i=1}^n(e_i)\right)\varphi(s) = \\ 
\bigvee_{i=1}^n\varphi(s^{-1})\varphi(e_i)\varphi(s) = \bigvee_{i=1}^n \varphi(s^{-1}e_is).
\end{equation*}
\end{proof}

The following is immediate from the definitions.

\begin{lemma} \label{lem:invariant} Let $X\subseteq E(S)\times {\mathcal P}_{fin}(E(S))$. Then $\widehat{E(S)}_{X'}$ is a closed and invariant subset of $\widehat{E(S)}$ under the natural action of $S$.
\end{lemma}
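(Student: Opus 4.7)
The plan is to verify each of the two claims --- closedness and $S$-invariance --- directly from the definitions, since both reduce to one-line checks once the setup is unwound.

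For closedness, I would invoke the same standard argument already used twice in the paper (once for $\widehat{E(S)}$ and once for $\widehat{E}_X$ in Subsection \ref{subs:X_semilattices}) and recorded as \cite[Lemma~34.1]{GH}. Concretely, for each pair $(e,\{e_1,\dots,e_n\})\in X'$, the set of $\varphi\in\{0,1\}^{E(S)}$ satisfying $\varphi(e)=\max_i\varphi(e_i)$ is clopen in the product topology, since it is determined by finitely many coordinate constraints. The subset $\widehat{E(S)}_{X'}\subseteq\widehat{E(S)}$ is the intersection of $\widehat{E(S)}$ with these clopen sets as $(e,\{e_1,\dots,e_n\})$ ranges over $X'$, hence closed.

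For invariance, I would fix $\varphi\in\widehat{E(S)}_{X'}$ and $s\in S$ with $\varphi({\bf d}(s))=1$, and set $\psi=s\cdot\varphi$, so that $\psi(e)=\varphi(s^{-1}es)$ for all $e\in E(S)$. Evaluating at ${\bf r}(s)$ gives $\psi({\bf r}(s))=\varphi(s^{-1}s)=\varphi({\bf d}(s))=1$, which in particular shows that $\psi$ is a nonzero character. Given $(e,\{e_1,\dots,e_n\})\in X'$, the $S$-invariance built into the definition of $X'$ yields $(s^{-1}es,\{s^{-1}e_1s,\dots,s^{-1}e_ns\})\in X'$, and then since $\varphi$ is itself $X'$-to-join the join equality for $\varphi$ at this new pair reads
\[ \psi(e) = \varphi(s^{-1}es) = \bigvee_{i=1}^n \varphi(s^{-1}e_is) = \bigvee_{i=1}^n \psi(e_i), \]
which is exactly the $X'$-to-join condition for $\psi$ at $(e,\{e_1,\dots,e_n\})$. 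Hence $\psi\in\widehat{E(S)}_{X'}$.

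No substantive obstacle is expected: closedness is inherited from the product topology on $\{0,1\}^{E(S)}$, and the whole point of replacing $X$ by its $S$-invariant closure $X'$ was precisely to make invariance under the natural action a tautological consequence of the $X'$-to-join conditions on $\varphi$. This matches the author's comment that the lemma is immediate from the definitions.
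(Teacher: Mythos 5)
Your proof is correct and is precisely the unwinding the paper intends when it calls the lemma ``immediate from the definitions'': closedness via the finitely-determined clopen conditions in $\{0,1\}^{E(S)}$ (the same \cite[Lemma~34.1]{GH} argument used for $\widehat{E}_X$), and invariance by feeding the conjugated pair, which lies in $X'$ by construction, back into the $X'$-to-join condition for $\varphi$. No discrepancies with the paper's (omitted) argument.
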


\begin{example} \label{ex:invariant} Let $X$ be the set that defines cover-to-join representations, that is, it consists of all  $(e, \{e_1,\dots, e_n\})\subseteq E(S) \times {\mathcal P}_{fin}(E(S))$ where $\{e_1,\dots, e_n\}$ is a cover of $e$. We show that $X$ is invariant.
Suppose that $s\in S$, $\{e_1,\dots, e_n\}$ is a cover of $e$ and show that $\{s^{-1}e_1s,\dots, s^{-1}e_ns\}$ is a cover for $s^{-1}es$. First, we may assume that $e\leq ss^{-1}$, otherwise we replace $s$ by $t=ss^{-1}e$ and have that $s^{-1}es = t^{-1}et$ and $s^{-1}e_is = t^{-1}e_it$ for all $i$. Let now $f\leq s^{-1}es$, $f\neq 0$. Then
$sfs^{-1} \leq ss^{-1}ess^{-1} = e$, $sfs^{-1}\neq 0$. Since $\{e_1,\dots, e_n\}$ is a cover of $e$, it follows that there is $i$ such that $e_isfs^{-1} = g \neq 0$. But then $s^{-1}e_isfs^{-1}s = s^{-1}gs \neq 0$.
That is, $s^{-1}e_isf \neq 0$. It follows that  $\{s^{-1}e_1s,\dots, s^{-1}e_ns\}$ is a cover for $s^{-1}es$. 
\end{example}

If $X\subseteq  E(S)\times {\mathcal P}_{fin}(E(S))$, one can restrict the natural action of $S$ on $\widehat{E(S)}$ to the closed invariant subset $\widehat{E(S)}_{X'}$. The groupoid of germs of this restricted action is denoted by ${\mathcal G}_{X}(S)$. 
We will denote the dual Boolean inverse semigroup ${\mathsf S} ({\mathcal G}_{X}(S))$  by ${\mathrm B}_X(S)$ and call it the {\em $X$-to-join-Booleanization} of $S$. The map $\iota_{{\mathrm B}_{X}(S)}\colon S\to {\mathrm B}_{X}(S)$ given by $s\mapsto \Theta[s]\cap {\mathcal G}_X(S)$ is an $X$-to-join representation of $S$ in ${\mathrm B}_{X}(S)$. The groupoid ${\mathcal G}_X(S)$ is the dual  groupoid of the the Boolean inverse semigroup  ${\mathrm B}_{X}(S)$ by means of the extended Stone duality (see Theorem~\ref{th:dual_distr}).

\begin{example}\mbox{}
\begin{enumerate}
\item 
If $X=\varnothing$, we have ${\mathcal G}_{X}(S)={\mathcal G}(S)$ and ${\mathrm B}_{\varnothing}(S)= {\mathrm B}(S)$.

\item In the case where $X$ defines cover-to-join representations, we have ${\mathcal G}_{X}(S)={\mathcal G}_{tight}(S)$ and  ${\mathrm B}_X(S)={\mathrm B}_{tight}(S)$.
\end{enumerate}
\end{example}

\begin{theorem} \label{th:booleanization2} Let $S$ be an inverse semigroup and $X\subseteq E(S)\times {\mathcal P}_{fin}(E(S))$. 
 Let further $B$ be a Boolean inverse semigroup and $\varphi\colon S\to B$ an $X$-to-join representation (resp. $\!\!\!$ a proper $X$-to-join representation). Then there is a unique morphism (resp. $\!\!\!$ a unique proper morphism) of Boolean inverse semigroups $\psi\colon {\mathrm B}_X(S)\to B$ such that $\varphi=\psi\iota_{{\mathrm B}_X(S)}$.
\end{theorem}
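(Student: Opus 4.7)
The plan is to mirror the proof of Theorem \ref{th:booleanization1} using the non-commutative Stone duality of Theorem \ref{th:dual_distr} in place of the classical Stone duality. Since ${\mathrm B}_X(S) = {\sf S}({\mathcal G}_X(S))$ and $B \cong {\sf S}({\mathcal G}_B)$, where ${\mathcal G}_B$ is the dual Stone groupoid of $B$, producing $\psi\colon {\mathrm B}_X(S) \to B$ is equivalent, by the Type~1 entry of the table following Theorem \ref{th:dual_distr}, to producing a relational covering morphism $f\colon {\mathcal G}_B \to {\mathcal P}({\mathcal G}_X(S))$ whose dual sends the generator $\iota_{{\mathrm B}_X(S)}(s) = \Theta[s] \cap {\mathcal G}_X(S)$ back to $\varphi(s)$.

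I would first construct the unit component $g\colon \widehat{E(B)}_{tight} \to \widehat{E(S)}$ by $g(\beta) = \beta \circ \varphi|_{E(S)}$. Because $\varphi$ is $X$-to-join, and hence $X'$-to-join by the lemma immediately preceding Lemma \ref{lem:invariant}, the semilattice representation $\varphi|_{E(S)}$ satisfies the identities defining $X'$; as $\beta$ is a morphism of Boolean algebras, composition preserves these joins, and properness of $\varphi|_{E(S)}$ rules out $g(\beta) = 0$. Thus $g$ takes values in the unit space $\widehat{E(S)}_{X'}$ of ${\mathcal G}_X(S)$, and a computation identical to that in the proof of Theorem \ref{th:booleanization1} shows $g$ is continuous and proper. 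On germs I would then define
\[
f([t,\beta]) \;=\; \{\,[s, g(\beta)] \in {\mathcal G}_X(S) \colon s \in S,\ \exists\, b \in E(B) \text{ with } \beta(b) = 1 \text{ and } \varphi(s)b = tb\,\},
\]
which expresses that $\varphi(s)$ and $t$ have the same germ at $\beta$ in ${\mathcal G}_B$.

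Axioms (RM1), (RM2), (RM6) and (RM7) are then immediate from the germ relation. Star-injectivity (RM3) follows because, if $[s, g(\beta)]$ lies in both $f([t_1,\beta])$ and $f([t_2,\beta])$, meeting the two witnesses yields $b \in E(B)$ with $\beta(b)=1$ and $t_1 b = \varphi(s) b = t_2 b$, so $[t_1,\beta]=[t_2,\beta]$. Star-surjectivity (RM4) is obtained by lifting $[s, g(\beta)]$ via $[\varphi(s),\beta] \in {\mathcal G}_B$, which has domain $\beta$ and contains $[s, g(\beta)]$ in its $f$-image with witness $b = {\mathbf d}(\varphi(s))$. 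Lower semicontinuity (RM5) is verified by unwinding the $f$-preimage of a basic open $\Theta[s] \cap {\mathcal G}_X(S)$ via the continuity of $g$. Theorem \ref{th:dual_distr} then delivers $\psi\colon {\mathrm B}_X(S) \to B$, and tracing generators gives $\psi \iota_{{\mathrm B}_X(S)} = \varphi$. Uniqueness follows because every element of ${\mathrm B}_X(S)$ is a compatible join of elements $\iota_{{\mathrm B}_X(S)}(s) e$ with $s \in S$ and $e \in E({\mathrm B}_X(S))$ (the generating statement at the end of Section \ref{subs:universal} applied to ${\mathcal G}_X(S)$), so $\psi$ is forced on the $\iota_{{\mathrm B}_X(S)}(s)$ by $\varphi$ and on $E({\mathrm B}_X(S))$ by Theorem \ref{th:booleanization1} applied to $\varphi|_{E(S)}$. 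In the proper case --- each $t \in B$ is a compatible join $\bigvee t_i$ with $\varphi(s_i) \geq t_i$ for some $s_i \in S$ --- some $\beta({\mathbf d}(t_i)) = 1$ by ultrafilter-primeness of $\beta$, and then $[s_i, g(\beta)] \in f([t,\beta])$ with witness $b = {\mathbf d}(t_i)$, so $f$ is at least single valued; the Type~2 correspondence then gives that $\psi$ is proper.

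The main obstacle I foresee is the systematic verification of (RM3)--(RM5); nothing is deep, but the bookkeeping in lower semicontinuity (RM5) is slightly delicate because ${\mathcal G}_X(S)$ is obtained by restriction (not quotient) of ${\mathcal G}(S)$, so $\varphi$ need not surject onto $E(B)$ and local agreement between $t \in B$ and $\varphi(s)$ must be expressed strictly through $E(B)$-witnesses, as in the definition of $f$ above.
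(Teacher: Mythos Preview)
Your proposal is correct and follows essentially the same route as the paper: both reduce the problem via the duality of Theorem~\ref{th:dual_distr} to constructing a relational covering morphism ${\mathcal G}_{tight}(B)\to{\mathcal P}({\mathcal G}_X(S))$ with unit component $\beta\mapsto\beta\circ\varphi|_{E(S)}$, then verify (RM1)--(RM7) and handle the proper case by showing the morphism is at least single valued. The only notable difference is cosmetic: the paper sets $\gamma([b,\alpha])=\{[s,\alpha\varphi|_{E(S)}]:\varphi(s)=b\}$ using a fixed representative, whereas you phrase the same condition via germ agreement of $\varphi(s)$ and $t$ at $\beta$; your formulation is actually cleaner for well-definedness, and the paper tacitly uses your reading when it invokes $[e,\alpha]=[\varphi(e_i),\alpha]$ in checking (RM6) (so your claim that (RM6) is ``immediate'' still hides the properness step the paper spells out there).
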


\begin{proof} Supose that $\varphi\colon S\to B$ is an $X$-to-join representation. Applying Theorem \ref{th:dual_distr}, the existence of a required morphism  $\psi\colon {\mathrm B}_X(S)\to B$ is equivalent to the existence of a relational covering morphism $\gamma\colon {\mathcal G}_{tight}(B) \to {\mathcal P}({\mathcal G}_X(S))$ such that $\varphi=\gamma^{-1}\iota_{{\mathrm B}_X(S)}$. Let $[b, \alpha]\in {\mathcal G}_{tight}(B)$.  We have $\alpha\varphi|_{E(S)} \in \widehat{E(S)}_{X'}$. Note also that $\varphi(s) =b$ implies that \mbox{$\alpha\varphi({\bf d}(s))=\alpha({\bf d}(b))$.}  
We can thus define
\begin{equation}\label{eq:29}
\gamma([b,\alpha]) = \{[s, \alpha\varphi|_{E(S)}] \colon \varphi(s)=b\}.
\end{equation}
We show that $\gamma$ is a relational covering morphism from  ${\mathcal G}_{tight}(B)$ to ${\mathcal G}_X(S)$. For $s, s_1, \dots, s_n \in S$ where $n\geq 0$ and $s_i\leq n$ for all $i$ we have:
\begin{align*}
[b, \alpha]\in \,  \gamma^{-1}(\Theta[s; s_1,\dots, s_n]\cap {\mathcal G}_X(S))  & \Leftrightarrow  \gamma([b,\alpha])\cap \Theta[s; s_1,\dots, s_n]\cap {\mathcal G}_X(S) \neq \varnothing   \\
& \Leftrightarrow b=\varphi(s), \alpha\varphi({\bf d}(s))=1, \alpha\varphi({\bf d}(s_1))=\alpha\varphi({\bf d}(s_n))=0  \\
& \Leftrightarrow [b,\alpha] \in \Theta[\varphi(s); \varphi(s_1),\dots, \varphi(s_n)] \cap {\mathcal G}_{tight}(B).\\
\end{align*} It follows that 
\begin{equation}\label{eq:aux:n15}
\gamma^{-1}(\Theta[s; s_1,\dots, s_n]|_{\widehat{E(S)}_{X'}}) = \Theta[\varphi(s)\setminus (\varphi(s_1)\vee \dots \vee \varphi(s_n))] |_{\widehat{E(B)}_{tight}}.
\end{equation}
Therefore, $\varphi=\gamma^{-1}\iota_{{\mathrm B}_X(S)}$ holds and $\gamma$ restricted to the unit space of  ${\mathcal G}_{tight}(B)$ is a proper and continuous map.  Let us show that the axioms (RM1) -- (RM7) hold. 

(RM1) Let $[s, \alpha\varphi|_{E(S)}] \in \gamma([b,\alpha])$  where $\varphi(s) = b$. Then $\varphi({\mathbf d}(s))={\mathbf d}(b)$ so that we have that $[{\mathbf d}(s), \alpha\varphi|_{E(S)}] \in \gamma([{\mathbf d}(b),\alpha])$.

(RM2) Suppose that $[s, \alpha\varphi|_{E(S)}] \in \gamma([b,\alpha])$, $[t, \beta\varphi|_{E(S)}] \in \gamma([c,\beta])$  where $([b,\alpha], [c,\beta]) \in {\mathcal G}_{tight}(B)^{(2)}$, $([s, \alpha\varphi|_{E(S)}], [t, \beta\varphi|_{E(S)}]) \in {\mathcal G}_X(S)^{(2)}$. Then $\varphi(s) = b$, $\varphi(t) = c$ and we have $[b,\alpha][c,\beta] = [bc, \beta]$, $[s, \alpha\varphi|_{E(S)}][t, \beta\varphi|_{E(S)}] = [st, \beta\varphi|_{E(S)}]$. Since $\varphi(st) = bc$, it follows that $[st, \beta\varphi|_{E(S)}]\in \gamma([bc, \beta])$.

(RM3) Note that $d([b,\alpha])=d([c,\beta])$ if and only if $\alpha=\beta$. Since $\gamma([b,\alpha])$ consists of all $[s, \alpha\varphi|_{E(S)}]$ where $\varphi(s)=b$ and $\gamma([c,\alpha])$ consists of all $[s, \alpha\varphi|_{E(S)}]$ where $\varphi(s)=c$ and in view of (RM7), axiom (RM3) follows.

(RM4) Suppose that $\beta \in \gamma(\alpha)$ where $\alpha \in \widehat{E(B)}_{tight}$ and $\beta \in \widehat{E(S)}_{X'}$. This means that $\beta = \alpha\varphi|_{E(S)}$. For any  $[s, \beta]$ we need to show that there is $[t,\alpha]$ such that $[s,\beta] \in \gamma([t,\alpha])$. Note that $[s, \beta] \in \gamma([t,\alpha])$ if and only if $\beta = \alpha\varphi|_{E(S)}$ and $t = \varphi(s)$. Therefore, it is enough to take $t=\varphi(s)$. In view of (RM7), axiom (RM4) follows.

(RM5) This follows from \eqref{eq:aux:n15}.

(RM6) Let $\alpha \in {\mathcal G}_{tight}(B)^{(0)} = \widehat{E(B)}_{tight}$. Then $\alpha$ is identified with the germ $[e, \alpha]$ where $\alpha(e)=1$. We have that $\gamma([e, \alpha])$ is the set of all $[s, \alpha\varphi|_{E(S)}]$ where $\varphi(s)=e$. 
Since the restriction of $\varphi$ to $E(S)$ is proper, there are $e_1, \dots, e_n \in E(S)$ such that $\varphi(e_1)\vee \dots \vee \varphi(e_n) \geq e$. Since $\alpha(e)=1$, there is $i$ such that $\alpha(\varphi(e_i))=1$. But then $[e, \alpha] = [\varphi(e_i), \alpha]$. It follows that $[e_i, \alpha\varphi|_{E(S)}] \in \gamma([e, \alpha])$. 

(RM7) Let $[b,\alpha]\in {\mathcal G}_{tight}(B)$. Then $[b,\alpha]^{-1} = [b^{-1}, \beta]$ where $\beta = b\cdot \alpha$. That is $\beta(e)= \alpha (b^{-1}eb)$ for all $e\in \widehat{E(B)}_{tight}$. By the definition of $\gamma$, we have that 
$\gamma([b,\alpha])$ consists of all $[s, \alpha\varphi|_{E(S)}]$ where $\varphi(s) = b$. Hence $\gamma([b,\alpha])^{-1}$ consists of all $[s^{-1}, (b\cdot \alpha)\varphi|_{E(S)}]$. On the other hand, we have that $\gamma([b,\alpha]^{-1}) = \gamma([b^{-1}, b\cdot \alpha])$ consists of all $[t, (b^{-1}\cdot b \cdot \alpha)\varphi|_{E(S)}]$ where $\varphi(t) = b^{-1}$. The latter equality is equivalent to $\varphi(t^{-1}) = b$. Also, $(b^{-1}\cdot b\cdot \alpha)(e) = \alpha (bb^{-1}ebb^{-1}) = \alpha(bb^{-1}) \alpha(e) = \alpha(e)$. The equality $\gamma([b,\alpha])^{-1} = \gamma([b,\alpha]^{-1})$ follows.

Suppose now that $\varphi\colon S\to B$ is a proper $X$-to-join representation. Due to Theorem~\ref{th:dual_distr}, it sufficies to check that that $\gamma$ is at least single-valued. Let $[b, \alpha]\in {\mathcal G}_{tight}(B)$. Since $\varphi$ is proper, there is $n\geq 1$ and $b_1,\dots, b_n\in B$ where $b=b_1\vee \dots\vee b_n$, $s_1,\dots, s_n\in S$ such that
$\varphi(s_i)\geq b_i$ for all $i=1,\dots, n$. Because $\alpha({\mathbf d}(b))=1$, there is $i\in \{1,\dots, n\}$ such that $\alpha({\mathbf d}(b_i))=1$. It follows that $\alpha({\mathbf d}(\varphi(s_i)))=1$. Therefore, $[b, \alpha] = [b_i, \alpha] = [\varphi(s_i), \alpha]$. Hence, every $[b, \alpha]\in {\mathcal G}_{tight}(B)$ is equal to some $[\varphi(s), \alpha]$ where $s\in S$, thus $\gamma([b,\alpha]) \neq \varnothing$. 

Uniqueness of $\psi$ easily follows from the uniqueness of the map $\psi$ in Theorem \ref{th:booleanization1} and the fact that every element of ${\mathrm B}_X(S)$ is a compatible join of elements of the form $\iota_{{\mathrm B}_X(S)}(s)e$ where $s\in S$ and $e\in E({\mathrm B}_X(S))$.
This finishes the proof.
\end{proof}

\section{Prime and core representations}\label{sect:examples}

The notion of a cover-to-join representation of a semilattice in a  Boolean algebra is an extension of the notion of a morphism between  Boolean algebras (see Proposition \ref{prop:gba_tight}).
We now consider a new class of representations of semilattices in Boolean algebras which in a similar way generalize proper morphisms from distributive lattices to Boolean algebras. 

Let $E$ be a semilattice. If $X\subseteq E$ is a finite subset and $x\in E$ is the least upper bound of $X$ we write $x=\bigvee X$ (of course, $\bigvee X$ does not necessarily exist).

\begin{definition} Let $E$ be a semilattice and $B$ a Boolean algebra. A proper representation $\varphi\colon E\to B$ will be called {\em prime}, provided that for any $x\in E$ and any finite cover $X$ of $x$ the following implication holds:
\begin{equation}\label{eq:def_prime}
\text{if } 
x=\bigvee X \text{ then } \varphi(x)=\bigvee_{x\in X} \varphi(x).
\end{equation}
\end{definition}

Clearly, any cover-to-join representation is prime. 

\begin{proposition}\label{prop:1} Let $B$ be a Boolean algebra and suppose that the semilattice $E$ admits the structure of a distributive lattice. Then a proper representation $\varphi\colon E\to B$ is prime if and only if it is a lattice morphism.
\end{proposition}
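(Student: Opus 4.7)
The plan is to prove both implications by directly unwinding the definitions, with the distributive law doing all of the real work in the nontrivial direction.

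For the easy direction ($\Leftarrow$), suppose $\varphi$ is a (proper) lattice morphism, so $\varphi$ preserves $0$, $\wedge$ and $\vee$. Then for any $x\in E$ and any finite cover $X$ of $x$ with $x=\bigvee X$, preservation of finite joins gives $\varphi(x)=\varphi(\bigvee X)=\bigvee_{y\in X}\varphi(y)$, which is the prime condition \eqref{eq:def_prime}. No appeal to $X$ being a cover is needed here.

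For the nontrivial direction ($\Rightarrow$), suppose $\varphi$ is prime. Since $\varphi$ is a representation it already preserves $0$ and $\wedge$, so the only thing to verify is that it preserves binary joins; finite joins then follow by induction, and together with preservation of meets this says $\varphi$ is a lattice morphism. Given $x,y\in E$, put $z=x\vee y$ (which exists because $E$ is a lattice) and $X=\{x,y\}$. Clearly $z=\bigvee X$, so to invoke \eqref{eq:def_prime} it suffices to show that $X$ is a cover of $z$. This is where distributivity enters: if $0\neq w\leq z$, then
\[
w = w\wedge z = w\wedge(x\vee y) = (w\wedge x)\vee(w\wedge y),
\]
so at least one of $w\wedge x$, $w\wedge y$ is nonzero. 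Hence $X$ is a cover of $z$ and the prime condition yields $\varphi(x\vee y)=\varphi(x)\vee\varphi(y)$.

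The main obstacle is very mild — essentially just verifying that a two-element join-set is automatically a cover in a distributive lattice; without distributivity this step would fail, which explains why the hypothesis is needed. There is also a small bookkeeping point, namely that the paper's definition of \emph{morphism between Boolean algebras} requires preservation of $\setminus$, but the proposition only asserts that $\varphi$ is a \emph{lattice} morphism, so one is not obliged to verify anything beyond preservation of $0$, $\wedge$ and $\vee$.
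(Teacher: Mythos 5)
Your proposal is correct and follows essentially the same route as the paper: the key step in both is to use distributivity to show that $\{x,y\}$ is a cover of $x\vee y$ (via $w=(w\wedge x)\vee(w\wedge y)$), after which the prime condition immediately gives preservation of binary joins, while the converse is immediate. The extra remarks about induction to finite joins and about not needing to check preservation of $\setminus$ are harmless elaborations of what the paper leaves implicit.
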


\begin{proof} Suppose that $\varphi$ is prime. Let $x,y\in E$. We show that $\{x,y\}$ is a cover of $x\vee y$. Let $0\neq z\leq x\vee y$. Then $z= z\wedge (x\vee y) = (z\wedge x)\vee (z\wedge y)$. If follows that either $z\wedge x$ or $z\wedge y$ is non-zero. Therefore, $\varphi(x\vee y) = \varphi(x)\vee \varphi(y)$, so that $\varphi$ is a lattice morphism. The reverse implication is immediate.
\end{proof}

Prime representations are $X$-to-join representations where $X$ consists of all $(e, \{e_1,\dots, e_n\})$ where $e=\bigvee_{i=1}^n e_n$ and $\{e_1,\dots, e_n\}$ is a cover of $e$. For prime representations, we denote  $\widehat{E(S)}_X$ by $\widehat{E(S)}_{prime}$.

\begin{example}\label{e:3} Let $n\geq 1$ and  $E_n=\{0, e_1,\dots, e_{n}\}$ with $0\leq e_1\leq \dots \leq e_{n}$. Since $E_n$ is a distributive lattice, $\iota_{{\mathrm B}_{prime}(E_n)}\colon E\to {\mathrm B}_{prime}(E_n)$ is an injective lattice morphism and ${\mathrm B}_{prime}(E_n)$ is isomorphic to the Booleanization $E_n^-$ of the distributive lattice $E_n$ (by Proposition \ref{prop:1} and the universal property of ${\mathrm B}_{prime}(E_n)$). Observe that  $\iota_{{\mathrm B}_{prime}(E_n)}$ is prime but not cover-to-join and that $\iota_{{\mathrm B}_{tight}(E_n)}$ maps $E_n$ onto a two-element Boolean algebra. 
\end{example}

Following Exel \cite{Exel09}, the element $f$ satisfying $f\leq e$ is said to be {\em dense} in $e$ provided that there is no non-zero element $d\leq e$ such that $d\wedge f=0$. It is clear that $f$ is dense in $e$ if and only if $\{f\}$ is a cover of $e$.

\begin{definition} Let $E$ be a semilattice and $B$ a Boolean algebra. A proper representation $\varphi\colon E\to B$ will be called {\em core}, provided that for any $e,f\in E$ such that $f\leq e$ and $f$ is dense in $e$ we have: $\varphi(f)=\varphi(e)$.
\end{definition}

Clearly, any cover-to-join representation of $E$ is core. 
Core representations are $X$-to-join representations where $X$ consists of all $(e, \{f\})$ with $f$ being dense in $e$. 
In the case of core representations, we denote $\widehat{E(S)}_X$ by $\widehat{E(S)}_{core}$. 

Let $S$ be an inverse semigroup and $T$ a Boolean inverse semigroup. A representation $\varphi\colon S\to T$ will be called {\em prime} (resp. {\em core}) provided that $\varphi|_{E(S)}\colon E(S)\to E(T)$ is a prime (resp. core) representation of the semilattice $E(S)$ in the Boolean algebra $E(T)$.

In a similar way as in Example \ref{ex:invariant}, one can show that the sets $X_1, X_2\subseteq E(S)\times {\mathcal P}_{fin}(E(S))$, where $X_1$ consists of all $(e, \{e_1,\dots, e_n\})$ where $e=\bigvee_{i=1}^n e_n$ and $\{e_1,\dots, e_n\}$ is a cover of $e$, and $X_2$ consists of all $(e, \{f\})$ with $f$ being dense in $e$, are $S$-invariant. Bearing in mind Lemma \ref{lem:invariant}, we obtain the following statement.

\begin{proposition} \mbox{} 
$\widehat{E(S)}_{prime}$ and $\widehat{E(S)}_{core}$ are invariant subsets of $\widehat{E(S)}$.
\end{proposition}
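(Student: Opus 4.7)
The plan is to verify directly that the two sets
\[
X_1 = \{(e,\{e_1,\dots,e_n\}) : e = \textstyle\bigvee_{i=1}^n e_i \text{ in } E(S) \text{ and } \{e_1,\dots,e_n\} \text{ is a cover of } e\}
\]
and
\[
X_2 = \{(e,\{f\}) : f \leq e \text{ and } f \text{ is dense in } e\}
\]
are themselves $S$-invariant, whereupon Lemma~\ref{lem:invariant} immediately delivers that $\widehat{E(S)}_{prime}=\widehat{E(S)}_{X_1}=\widehat{E(S)}_{X_1'}$ and $\widehat{E(S)}_{core}=\widehat{E(S)}_{X_2}=\widehat{E(S)}_{X_2'}$ are closed and invariant in $\widehat{E(S)}$. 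Both verifications follow the template of Example~\ref{ex:invariant}: given a pair in $X_i$ and $s\in S$, check that each defining condition passes to the conjugated pair.

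The central algebraic tool I would use is the standard fact that, for each $s\in S$, the map $x\mapsto s^{-1}xs$ restricts to an order-isomorphism $[0,ss^{-1}]\to[0,s^{-1}s]$ between principal downsets of $E(S)$, with inverse $y\mapsto sys^{-1}$, and therefore preserves finite meets as well as any existing finite joins. To exploit this I would first perform the Example~\ref{ex:invariant} reduction, replacing $e$ by $\bar e=ess^{-1}$ and each $e_i$ (or $f$) by $\bar e_i=e_iss^{-1}$ (or $\bar f=fss^{-1}$); since idempotents in an inverse semigroup commute, $s^{-1}\bar e\,s=s^{-1}es$ and $s^{-1}\bar e_is=s^{-1}e_is$, so the conjugated pair is unchanged while every idempotent in sight now lies in $[0,ss^{-1}]$.

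For $X_2$: density of $f$ in $e$ yields density of $\bar f$ in $\bar e$, since for any $0\neq y\leq\bar e$ one has $y\leq e$ and $y\leq ss^{-1}$, whence $yf\neq 0$ and then $y\bar f=(yf)(ss^{-1})=yf\neq 0$ by the absorption $y\cdot ss^{-1}=y$ and commutativity of idempotents. Density is a purely meet-theoretic statement and is therefore transferred by the order-isomorphism to yield density of $s^{-1}fs$ in $s^{-1}es$. For $X_1$: the cover condition transfers from $(e,\{e_i\})$ to $(\bar e,\{\bar e_i\})$ exactly as in Example~\ref{ex:invariant}, and then to the conjugated pair by the order-isomorphism; the LUB relation $\bar e=\bigvee\bar e_i$ in $[0,ss^{-1}]$ follows from $e=\bigvee e_i$ in $E(S)$ together with the simultaneously available cover property, and the order-isomorphism transports this to $s^{-1}es=\bigvee s^{-1}e_is$.

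The main technical obstacle is the last step: showing that, in the presence of the cover of $\bar e$ by $\{\bar e_i\}$ and the LUB $e=\bigvee e_i$ in $E(S)$, one really has $\bar e=\bigvee\bar e_i$ in the sub-semilattice $[0,ss^{-1}]$. This is a local distributivity statement that is not automatic in a general meet-semilattice; the cover hypothesis is precisely what rules out the pathologies that would otherwise break the transfer, and is the only place where more than Example~\ref{ex:invariant}-style commutative manipulation is needed.
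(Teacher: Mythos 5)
Your strategy is the paper's own: show that the defining sets $X_1$ (prime) and $X_2$ (core) are $S$-invariant and then quote Lemma~\ref{lem:invariant}. The core half is correct and complete: density of $f$ in $e$ is exactly the statement that $\{f\}$ is a cover of $e$, so the $n=1$ case of Example~\ref{ex:invariant} already gives $(s^{-1}es,\{s^{-1}fs\})\in X_2$, and your order-isomorphism bookkeeping for $[0,ss^{-1}]\to[0,s^{-1}s]$ is fine.

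The prime half has a genuine gap, and you have located it exactly but then asserted it away. You claim that ``the cover hypothesis is precisely what rules out the pathologies'' in the step from $e=\bigvee_i e_i$ to $\bar e=\bigvee_i\bar e_i$, where $\bar x=x\,ss^{-1}$. It is not. All the cover hypothesis yields is that any upper bound $h\le\bar e$ of $\{\bar e_1,\dots,\bar e_n\}$ is \emph{dense} in $\bar e$ (every nonzero $y\le\bar e$ meets some $\bar e_i\le h$), and density does not imply equality. Concretely, let $E$ be the meet-subsemilattice of the power set of $\{1,\dots,5\}$ consisting of $\varnothing$, $p=\{1\}$, $q=\{2\}$, $r=\{1,2\}$, $g=\{1,2,3\}$, $a=\{1,4\}$, $b=\{2,5\}$, $e=\{1,\dots,5\}$. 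Then $e=a\vee b$ and $\{a,b\}$ is a cover of $e$, yet
\[
(a\wedge g)\vee(b\wedge g)=p\vee q=r\subsetneq g=e\wedge g ,
\]
so $(e\wedge g,\{a\wedge g,b\wedge g\})$ satisfies the cover condition but not the join condition, and the transfer fails. This is not a repairable presentational issue: realizing $g$ as $ss^{-1}$ for a partial bijection $s$ carrying $\{1,2,3\}$ to a disjoint copy $\{1',2',3'\}$, one gets an inverse semigroup in which the character supported on the principal filter of $s^{-1}s$ is prime (the only elements of $X_1$ above $s^{-1}s$ are the trivial ones), while its image under the natural action of $s^{-1}$ is the character supported on $g^{\uparrow}=\{g,e\}$, which violates primeness at $(e,\{a,b\})$. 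So the LUB-transfer step cannot be closed by the cover hypothesis alone; note that the paper's own one-line appeal to Example~\ref{ex:invariant} addresses only the cover condition and is silent on the join condition, so it suffers from the same defect --- but a reviewed proof must either supply the missing argument or add hypotheses under which it holds, and yours does neither.
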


\section{Generators and relations} \label{sect:generators}
We first briefly describe the universal algebraic approach to Boolean inverse semigroups to be found in \cite{W17}. Let $S$ be a Boolean inverse semigroup. The operations $\setminus$ and $\vee$ on $E(S)$ can be extended to $S$ as follows: 
$$
s\setminus t = ({\bf r}(s)\setminus {\bf r}(t))s({\bf d}(s)\setminus {\bf d}(t)),
$$
$$
s\triangledown t = (s\setminus t) \vee t,
$$
where $\vee$ on the right hand side of the last formula is the join with respect to the natural partial order of the compatible elements $s\setminus t$ and $t$. The operation $\setminus$ is called the {\em difference} operation and $\triangledown$ is the (right-handed) {\em skew join operation}. Upon adding the operations $\setminus$ and $\triangledown$ to the signature of a Boolean inverse semigroup $S$, it becomes an algebra $(S; 0, ^{-1}, \cdot, \setminus, \triangledown)$. We say that the algebra $(S; 0, ^{-1}, \cdot, \setminus, \triangledown)$ is {\em attached} to $S$ (or, more presicely, to $(S; 0, ^{-1}, \cdot)$) and that $S$ is the {\em inverse semigroup reduct} of $(S; 0, ^{-1}, \cdot, \setminus, \triangledown)$. The operation $\triangledown$ is an extension of the partially defined operation $\vee$ to the whole $S\times S$. If the elements $a_1,\dots, a_n$ are pairwise compatible (for example, if $a_i\in E(S)$ for all $i$) then the element $\bigtriangledown_{i=1}^n a_i$ is well defined and equals  $\vee_{i=1}^n a_i$.

\begin{proposition}[\cite{W17}]An algebra $(S; 0, ^{-1}, \cdot, \setminus, \triangledown)$ of type $(0,1,2,2,2)$ is an algebra attached to a Boolean inverse semigroup, as described above, if and only if $(S; 0, ^{-1}, \cdot)$ is an inverse semigroup with zero $0$ and  the following identities hold:
\begin{enumerate}
\item[(1)] $({\mathbf d}(x) \setminus {\mathbf d}(y))^2  = {\mathbf d}(x) \setminus {\mathbf d}(y)$, $({\mathbf d}(x) \triangledown {\mathbf d}(y))^2  = {\mathbf d}(x) \triangledown {\mathbf d}(y)$; 
\item[(2)] all the defining identities (and hence all the identities) of the variety of Boolean algebras  if variables $x,y, \dots, $
are replaced by ${\mathbf d}(x)$, ${\mathbf d}(y)$, $\dots$, and the operations $0$, $\wedge$, $\vee$ and $\setminus$ (of Boolean algebras) are replaced by the operations $0$, $\cdot$, $\triangledown$ and $\setminus$ (of algebras attached to Boolean inverse semigroups), respectively;
\item[(3)] $x\triangledown y \geq x\setminus y$, $x\triangledown y \geq y$;
\item[(4)] ${\mathbf d}(x\triangledown y) = {\mathbf d}(x\setminus y) \triangledown {\mathbf d}(y)$;
\item[(5)] $x\setminus y = ({\bf r}(x)\setminus {\bf r}(y))x({\bf d}(x)\setminus {\bf d}(y))$;
\item[(6)] $z(({\bf d}(x)\setminus {\bf d}(y) \triangledown {\bf d}(y)) = z({\bf d}(x)\setminus {\bf d}(y)) \triangledown z {\bf d}(y)$.
\end{enumerate}
Moreover, if  $(S; 0, ^{-1}, \cdot, \setminus, \triangledown)$ is an algebra of type $(0,1,2,2,2)$ such that $(S; 0, ^{-1}, \cdot)$ is an inverse semigroup with zero $0$ and the above stated identities hold, then $(S; 0, ^{-1}, \cdot)$ is a Boolean inverse semigroup and $(S; 0, ^{-1}, \cdot, \setminus, \triangledown)$ is  the algebra attached to $(S; 0, ^{-1}, \cdot)$. \end{proposition}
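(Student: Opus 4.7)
This is a biconditional with a fairly clean forward direction (routine verification) and a more subtle converse (reconstruction of the Boolean inverse semigroup structure from the axioms). I would split the proof accordingly, treating each direction in turn.

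For the forward implication, starting from a Boolean inverse semigroup $(S;0,{}^{-1},\cdot)$ with $\setminus$ and $\triangledown$ defined as in the preamble, I would check each identity by direct computation. Identity (5) is literally the definition. For (1) and (2), note that on $E(S)$ the operations $\triangledown$ and $\setminus$ reduce to the Boolean-algebra join and relative complement (because ${\mathbf d}(x)\setminus {\mathbf d}(y)$ is the Boolean difference in $E(S)$, and then ${\mathbf d}(x)\setminus{\mathbf d}(y)$ and ${\mathbf d}(y)$ are orthogonal, hence compatible, so their join equals ${\mathbf d}(x)\vee{\mathbf d}(y)$); both identities then follow from the Boolean-algebra structure on $E(S)$. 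Identity (3) is immediate: $x\triangledown y=(x\setminus y)\vee y$ as a join of compatibles bounds both summands. Identity (4) uses that ${\mathbf d}$ preserves compatible joins, which holds in any inverse semigroup with joins of compatibles: ${\mathbf d}((x\setminus y)\vee y)={\mathbf d}(x\setminus y)\vee {\mathbf d}(y)$, and these are orthogonal idempotents (by (5), ${\mathbf d}(x\setminus y)\leq {\mathbf d}(x)\setminus {\mathbf d}(y)$), so their join equals the skew join. Finally (6) follows from distributivity of multiplication over finite compatible joins applied to the orthogonal idempotents ${\mathbf d}(x)\setminus{\mathbf d}(y)$ and ${\mathbf d}(y)$.

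For the converse, suppose $(S;0,{}^{-1},\cdot,\setminus,\triangledown)$ satisfies (1)--(6) with inverse-semigroup reduct $(S;0,{}^{-1},\cdot)$. The first step is to observe that (1) and (2) say precisely that $(E(S);0,\cdot,\triangledown|_{E(S)},\setminus|_{E(S)})$ satisfies the defining identities of a (generalised) Boolean algebra, so $E(S)$ carries a Boolean-algebra structure whose meet is multiplication and in which $\triangledown$ and $\setminus$ act as the Boolean join and difference. Next I would show that for any compatible pair $a,b\in S$ the element $a\triangledown b$ is the join $a\vee b$ with respect to the natural partial order: (3) says it is an upper bound of $a\setminus b$ and $b$, and using (5) one checks $a=(a\setminus b)\vee (a\wedge_{\mathrm{nat}} b)$ with the pieces lying below $a\triangledown b$, so $a\leq a\triangledown b$ as well. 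Uniqueness as a least upper bound would come by applying (4)--(6) to show that any other upper bound $c$ of $\{a,b\}$ dominates $a\triangledown b$, essentially because on the domain side (4) forces ${\mathbf d}(a\triangledown b)={\mathbf d}(a)\vee{\mathbf d}(b)$ in $E(S)$, and (6) then propagates this from idempotents to arbitrary elements via left multiplication. Thus compatible joins exist and $\triangledown$ realises them; (5) then makes the definitions of $\setminus$ and $\triangledown$ from the preamble coincide with the given operations.

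The main obstacle is the reconstruction step, showing that $a\triangledown b$ is genuinely the least upper bound of a compatible pair, not merely an upper bound. The tension is that the axioms (3)--(6) only supply a handful of identities, and one must leverage them to force a lattice-theoretic extremality statement. Axiom (6) is the workhorse here: it lets one transport the Boolean-algebra structure on $E(S)$ (which is cleanly handed over by (1)--(2)) to statements about arbitrary elements of $S$ by multiplying on the left, and in combination with axiom (4) governing domains, it lets one compare an arbitrary upper bound $c$ with $a\triangledown b$ by first comparing their domain idempotents in the Boolean algebra $E(S)$ and then pulling the comparison back to $S$. Once this extremality is in hand, everything else --- uniqueness of the attached algebra, and the fact that $(S;0,{}^{-1},\cdot)$ is Boolean inverse --- is formal.
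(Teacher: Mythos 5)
The paper does not actually prove this proposition: it is quoted as a result of Wehrung \cite{W17}, with no argument supplied, so there is nothing in the paper to compare your attempt against. On its own terms, your forward direction is correct and essentially routine: (5) is the definition, (1)--(2) reduce to the Boolean algebra structure of $E(S)$ once one notes that $e\setminus f$ and $e\triangledown f$ agree with the Boolean difference and join on idempotents, (3) is immediate from $x\triangledown y=(x\setminus y)\vee y$, and (4), (6) follow because ${\mathbf d}$ and left multiplication distribute over compatible joins.

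The converse, which is the entire content of the theorem, is where your write-up has a genuine gap rather than merely a compression. First, the identity ``$a=(a\setminus b)\vee(a\wedge_{\mathrm{nat}}b)$'' is circular as written: $\vee$ denotes the least upper bound in the natural partial order, whose existence for compatible pairs is exactly what is being established. What the axioms give you directly is $a=a{\mathbf d}(a)=a\bigl(({\mathbf d}(a)\setminus{\mathbf d}(b))\triangledown{\mathbf d}(a){\mathbf d}(b)\bigr)=a({\mathbf d}(a)\setminus{\mathbf d}(b))\triangledown a{\mathbf d}(b)$ via (2) and (6); you must then still relate $a({\mathbf d}(a)\setminus{\mathbf d}(b))$ to $a\setminus b$, which by (5) carries the extra left factor ${\mathbf r}(a)\setminus{\mathbf r}(b)$. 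Second, and more importantly, the extremality step is only asserted (``essentially because (4) forces the domains to agree and (6) propagates this''). To close it you must show: (i) $a\setminus b$ and $b$ are always compatible, because their domains and ranges are orthogonal by (5) and (2) --- without this the comparison with the canonically attached operations does not even parse; (ii) if $c\geq u$ and $c\geq v$ for orthogonal $u,v$, then $c{\mathbf d}(u\triangledown v)=c{\mathbf d}(u)\triangledown c{\mathbf d}(v)=u\triangledown v$ by (4) and (6), so that $u\triangledown v\leq c$ and $u\triangledown v$ is the least upper bound; and (iii) $u\triangledown v\geq u$, not just $u\triangledown v\geq u\setminus v$, which again needs the decomposition from (6). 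None of these steps is deep, but together they constitute the proof; your outline correctly locates where the work lies without carrying it out.
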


It follows that Boolean inverse semigroups in the extended signature form a variety of algebras. It is shown in \cite{W17} that morphisms between Boolean inverse semigroups in the extended signature can be characterized precisely as additive morphisms between their inverse semigroup reducts. That is, a map $\varphi\colon S\to T$ between Boolean inverse semigroups is a morphisms between their attached algebras  if and only if:
\begin{enumerate}
\item $\varphi(st)=\varphi(s)\varphi(t)$ for all $s,t\in S$;
\item $\varphi(0)=0$;
\item $\varphi(a\vee b) = \varphi(a) \vee \varphi (b)$ for all $a,b\in S$ such that $a\sim b$.
\end{enumerate}

\begin{proposition} Let $X$ be a set and let $FI(X)$ be the free inverse semigroup on $X$. Then the free Boolean inverse semigroup (in the extended signature), $FBI(X)$, on $X$ is isomorphic to ${\mathrm B}(FI(X)\cup \{0\})$.
\end{proposition}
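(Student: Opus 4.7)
The plan is to show that ${\mathrm B}(FI(X)\cup\{0\})$, together with the canonical map
$\eta\colon X\hookrightarrow FI(X)\cup\{0\}\xrightarrow{\iota_{{\mathrm B}(FI(X)\cup\{0\})}}{\mathrm B}(FI(X)\cup\{0\})$,
satisfies the universal property of the free Boolean inverse semigroup on $X$ in the extended signature: given any Boolean inverse semigroup $T$ and any map $f\colon X\to T$, there is a unique additive morphism $\tilde f\colon {\mathrm B}(FI(X)\cup\{0\})\to T$ with $\tilde f\circ \eta =f$. Verifying this universal property identifies ${\mathrm B}(FI(X)\cup\{0\})$ with $FBI(X)$ up to canonical isomorphism.

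For existence, I first apply the universal property of $FI(X)$ to extend $f$ to a semigroup morphism $f_1\colon FI(X)\to T$, and then send $0\mapsto 0$ to obtain a zero-preserving semigroup morphism $f_2\colon FI(X)\cup\{0\}\to T$. To invoke Theorem~\ref{th:booleanization2} with the defining subset taken to be $\varnothing$ (so that $\varnothing$-to-join representations are just proper representations and ${\mathrm B}_{\varnothing}(S)={\mathrm B}(S)$), I must check that $f_2$ is a representation, i.e.\ that $f_2|_{E}$ is proper. To arrange this, I restrict the codomain to the Boolean inverse subsemigroup $T'\subseteq T$ generated by $f(X)$ (closed under $\cdot,{}^{-1},0,\setminus,\triangledown$). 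Then $E(T')$ is the generalized Boolean subalgebra of $E(T)$ generated by $f_1(E(FI(X)))$, and in any generalized Boolean algebra an element built from a finite family of generators by $0,\vee,\wedge,\setminus$ is dominated by the join of those generators, which is exactly properness. Theorem~\ref{th:booleanization2} now supplies a unique morphism ${\mathrm B}(FI(X)\cup\{0\})\to T'\hookrightarrow T$ extending $f_2$, and in particular extending $f$ along $\eta$.

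For uniqueness I recall from Subsection~\ref{subs:universal} that every element of ${\mathrm B}(S)$ (with $S=FI(X)\cup\{0\}$) is a compatible join of elements $\iota(s)e$ with $s\in S$ and $e\in E({\mathrm B}(S))={\mathrm B}(E(S))$, and that each such idempotent is a finite Boolean combination of elements of $\iota(E(S))$. Since all these constructions ($\cdot$ realizing $\wedge$, $\triangledown$ realizing compatible $\vee$, and $\setminus$) are operations of the extended signature, $\iota(FI(X)\cup\{0\})$ generates ${\mathrm B}(FI(X)\cup\{0\})$ as an algebra in the extended signature; using also $\cdot,{}^{-1}$ and $0\mapsto 0$ this reduces further to $\eta(X)$. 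Consequently, any two additive morphisms into $T$ that agree on $\eta(X)$ must coincide, giving uniqueness of $\tilde f$.

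The one place calling for real care is the properness step: I expect the main (mild) obstacle to be checking that passing to $T'$ really produces a Boolean inverse semigroup structure for which $E(T')$ is the generalized Boolean subalgebra generated by $f_1(E(FI(X)))$, and hence that every element of $E(T')$ is majorized by a finite join from that generating set. Once this is verified, Theorem~\ref{th:booleanization2} and the standard Yoneda-style argument from universal properties deliver the claimed isomorphism.
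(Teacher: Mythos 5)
Your proof follows the same route as the paper: extend $f$ to a semigroup morphism via the universal property of $FI(X)$, adjoin $0$, and then invoke the universal property of the Booleanization (Theorem~\ref{th:booleanization2} with defining set $\varnothing$), finishing with the standard generation argument for uniqueness. The one difference is that you explicitly handle the properness requirement on idempotents by corestricting to the subalgebra $T'$ generated by $f(X)$ before applying the universal property and then composing with the (additive, though not necessarily proper) inclusion $T'\hookrightarrow T$; this is a legitimate and welcome patch of a step the paper passes over in silence, and your sketch of why every idempotent of $T'$ is dominated by a finite join from $f_1(E(FI(X)))$ is correct.
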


\begin{proof} Let $S$ be a Boolean inverse semigroup and let $\varphi\colon X\to S$ be a map. We need to show that $\varphi$ can be extended to an additive morphism $\widetilde{\varphi}\colon {\mathrm B}(FI(X)\cup \{0\}) \to S$. Firstly, the universal property of $FI(X)$ implies that $\varphi$ can be extended to a morphism of semigroups (without zero) $\varphi'\colon FI(X) \to S$. Attaching a zero element to $FI(X)$ and putting $\varphi'(0) = 0$, we get a morphism of inverse semigroup with zero $FI(X)\cup \{0\} \to S$. The claim now follows from the universal property of ${\mathrm B}(FI(X)\cup \{0\})$.
\end{proof}

We now describe the Boolean inverse semigroups ${\mathrm B}_X(S)$ (in the extended signature) by generators and relations.

\begin{proposition}\label{prop:relations} Let $S$ be an inverse semigroup and $X\subseteq E(S)\times {\mathcal P}_{fin}(E(S))$.  Then ${\mathrm B}_X(S)$ is generated by the set $\{[s]\colon s\in S\}$ subject to the relations:

\begin{enumerate} 
\item $[0]=0$; 
\item $[st] = [s][t]$ for all $s,t\in S$;
\item $[e] = \bigtriangledown_{i=1}^n [e_n]$ for all $(e, \{e_1,\dots, e_n\})\in X$.
\end{enumerate}
\end{proposition}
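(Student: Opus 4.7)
The strategy is to show that ${\mathrm B}_X(S)$ satisfies the universal property that characterises the Boolean inverse semigroup (in the extended signature) presented by $\{[s] : s \in S\}$ subject to relations (1)--(3). Writing $\iota = \iota_{{\mathrm B}_X(S)}$, this amounts to establishing a natural bijection between additive morphisms $\alpha \colon {\mathrm B}_X(S) \to B$ and maps $\psi \colon S \to B$ satisfying (1)--(3), via $\psi = \alpha \circ \iota$. The forward direction is immediate: $\iota$ itself satisfies (1) and (2) because it is a semigroup morphism preserving zero, and (3) because $\iota|_{E(S)}$ is an $X$-to-join representation by construction; hence $\alpha \circ \iota$ inherits all three relations for any additive morphism $\alpha$.

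For the reverse direction, I would take a map $\psi \colon S \to B$ satisfying (1)--(3). The subtlety is that $\psi$ is not \emph{a priori} a representation in the sense of Definition~\ref{def:repr} applied to the idempotent parts, since $\psi|_{E(S)}$ might fail to be proper as a map into $E(B)$. The plan is to pass to the Boolean inverse sub-semigroup $B' \subseteq B$ generated by $\psi(S)$: here ${\mathbf d}(\psi(s)) = \psi(s^{-1}s)$ and ${\mathbf r}(\psi(s)) = \psi(ss^{-1})$ both lie in $\psi(E(S))$, and a structural analysis -- invoking the formula $s \setminus t = ({\mathbf r}(s) \setminus {\mathbf r}(t)) s ({\mathbf d}(s) \setminus {\mathbf d}(t))$ from the extended signature, together with the fact that every element of a Boolean inverse semigroup generated by an inverse subsemigroup $A$ with zero is a compatible join of expressions $ae$ with $a \in A$ and $e$ in the Boolean subalgebra generated by $E(A)$ -- shows that $E(B')$ is precisely the Boolean subalgebra of $E(B)$ generated by $\psi(E(S))$. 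Since any element of a Boolean subalgebra generated by a subset is dominated by a finite join of elements of that subset, the restriction $\psi|_{E(S)} \colon E(S) \to E(B')$ is proper, so $\psi \colon S \to B'$ is a bona fide representation, and by (3) it is an $X$-to-join representation. Theorem~\ref{th:booleanization2} then yields a unique morphism $\tilde\psi \colon {\mathrm B}_X(S) \to B'$ extending $\psi$, and composing with the inclusion $B' \hookrightarrow B$ provides the required morphism into $B$. Uniqueness is inherited from Theorem~\ref{th:booleanization2}, since the image of any morphism ${\mathrm B}_X(S) \to B$ extending $\psi$ is a Boolean inverse sub-semigroup of $B$ containing $\psi(S)$, and hence coincides with $B'$.

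The main technical point I expect to dwell on is the identification of $E(B')$ with the Boolean subalgebra of $E(B)$ generated by $\psi(E(S))$, which is what guarantees the properness needed to apply Theorem~\ref{th:booleanization2}. This identification is essentially folklore, but deserves a careful verification in the extended signature, tracing how $\setminus$ and $\triangledown$ interact with semigroup multiplication via the defining identities listed just before the proposition; once this is in hand, the remainder of the argument is a bookkeeping exercise with the universal property already established in Section~\ref{sec:X}.
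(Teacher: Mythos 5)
Your proposal is correct and follows the same route as the paper, whose entire proof is the single sentence that the statement follows from the universal property of ${\mathrm B}_X(S)$ established in Theorem~\ref{th:booleanization2}. The extra care you take---corestricting a map $\psi\colon S\to B$ satisfying (1)--(3) to the Boolean inverse subsemigroup generated by $\psi(S)$ so that $\psi|_{E(S)}$ becomes proper and Theorem~\ref{th:booleanization2} applies---is precisely the detail the paper leaves implicit, and your handling of it is sound.
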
 

\begin{proof} The statement follows from the universal property of ${\mathrm B}_X(S)$ (Theorem \ref{th:booleanization2}).
\end{proof}

\begin{corollary} 
\begin{enumerate}  Let $S$ be an inverse semigroup. 
\item The universal Booleanization ${\mathrm B}(S)$ is generated by the set $\{[s]\colon s\in S\}$ subject to the relations (1) and (2) from Proposition \ref{prop:relations}.
\item Let $X\subseteq E(S)\times {\mathcal P}_{fin}(E(S))$. Then ${\mathrm B}_X(S)$ is a quotient of ${\mathrm B}(S)$, that is obtained by adding relations (3) from Proposition \ref{prop:relations} to the defining relations of ${\mathrm B}(S)$.
\end{enumerate}
\end{corollary}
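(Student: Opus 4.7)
The plan is to deduce both parts from Proposition~\ref{prop:relations} together with the fact that ${\mathrm B}_{\varnothing}(S)={\mathrm B}(S)$ noted in the earlier example.

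For part (1), I would simply specialize Proposition~\ref{prop:relations} to the case $X=\varnothing$. Since there are no tuples $(e,\{e_1,\dots,e_n\})\in X$ when $X$ is empty, the third family of relations is vacuous, and the presentation reduces to generators $\{[s]:s\in S\}$ subject to relations (1) and (2) only. Combining this with the identification ${\mathrm B}_{\varnothing}(S)={\mathrm B}(S)$ gives the claim.

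For part (2), I would construct the canonical morphism explicitly and identify its kernel. By part (1), ${\mathrm B}(S)$ is presented by generators $\{[s]:s\in S\}$ modulo relations (1) and (2); and $\iota_{{\mathrm B}_X(S)}\colon S\to {\mathrm B}_X(S)$ satisfies precisely these relations by its nature as a representation. The universal property of ${\mathrm B}(S)$ (Theorem~\ref{th:booleanization2} with $X=\varnothing$) therefore yields a unique additive morphism $q\colon {\mathrm B}(S)\to {\mathrm B}_X(S)$ sending $[s]\mapsto \iota_{{\mathrm B}_X(S)}(s)$. This $q$ is surjective because the image contains all generators of ${\mathrm B}_X(S)$ (by Proposition~\ref{prop:relations}), and the generators generate. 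Let $J$ denote the congruence on ${\mathrm B}(S)$ generated by the relations (3), i.e.\ by $\iota_{{\mathrm B}(S)}(e)=\bigtriangledown_{i=1}^n \iota_{{\mathrm B}(S)}(e_i)$ for all $(e,\{e_1,\dots,e_n\})\in X$. I would then show that $q$ factors through the quotient ${\mathrm B}(S)/J$ and that the induced map ${\mathrm B}(S)/J\to {\mathrm B}_X(S)$ is an isomorphism.

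For the isomorphism, I would verify the universal property of ${\mathrm B}_X(S)$ (Theorem~\ref{th:booleanization2}) on ${\mathrm B}(S)/J$: any $X$-to-join representation $\varphi\colon S\to B$ extends uniquely to an additive morphism $\widetilde{\varphi}\colon {\mathrm B}(S)\to B$ by the universal property of ${\mathrm B}(S)$, and this $\widetilde{\varphi}$ sends each defining relation of $J$ to an identity in $B$ precisely because $\varphi$ is $X$-to-join; hence $\widetilde{\varphi}$ descends to ${\mathrm B}(S)/J$. Uniqueness on ${\mathrm B}(S)/J$ is immediate from the fact that the images of the $[s]$'s generate. The only subtlety I anticipate is checking that the relation $[e]=\bigtriangledown_{i=1}^n[e_i]$ in ${\mathrm B}(S)$ truly corresponds (via $q$) to the equality $\iota_{{\mathrm B}_X(S)}(e)=\bigvee_{i=1}^n\iota_{{\mathrm B}_X(S)}(e_i)$ in ${\mathrm B}_X(S)$; this uses that the $\iota_{{\mathrm B}_X(S)}(e_i)$ are pairwise compatible idempotents, for which $\bigtriangledown$ agrees with $\vee$, as remarked before Proposition~\ref{prop:relations}. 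No other step presents difficulty.
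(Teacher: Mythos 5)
Your proof is correct and follows the same route the paper intends: the corollary is stated there without proof as an immediate consequence of Proposition~\ref{prop:relations}, and your argument (specializing to $X=\varnothing$ for part (1), then the standard presentation/universal-property argument for part (2), using that Boolean inverse semigroups in the extended signature form a variety so the quotient by the congruence $J$ exists) is exactly the natural filling-in of those details. The subtlety you flag about $\bigtriangledown$ versus $\vee$ on compatible idempotents is handled correctly.
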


\begin{remark} In the case where the set $X$  defines cover-to-join representations, presentation of ${\mathrm B}_X(S)$ by generators and relations was treated in \cite{LV19}.
\end{remark}

\section{Weakly meet-preserving quotients of ${\mathrm B}(S)$ via $X$-to-join representations}\label{sect:all}

Let $S$ be an inverse semigroup and $X\subseteq E(S)\times {\mathcal P}_{fin}(E(S))$ be an invariant subset. The canonical quotient morphism of Boolean inverse semigroups ${\mathrm B}(S) \to {\mathrm B}_X(S)$ corresponds, my means of the duality of Theorem \ref{th:dual_distr}, to the inclusion map  ${\mathcal G}_X(S) \hookrightarrow {\mathcal G}(S)$ between their dual groupoids. Since the latter map is  single-valued, the canonical quotient morphism ${\mathrm B}(S) \to {\mathrm B}_X(S)$ is weakly meet-preserving.
We now show that any quotient of ${\mathrm B}(S)$ by a weakly meet-preserving morphism is isomorphic to ${\mathrm B}_X(S)$ for a suitable $X$.

Let ${\mathcal X}$ be a closed and invariant subset of $\widehat{E(S)}$ and ${I}_{\mathcal X}$ the ideal of the Boolean algebra $E({\mathrm B}(S))$ consisting of those compact-opens of $\widehat{{E(S)}}$ which do not intersect with ${\mathcal X}$.  Define the congruence $\sim_{\mathcal X}$ on ${\mathrm B}(S)$  by $a \sim_{\mathcal X} b$ if and only if there are $e,f,g\in E({\mathrm B}(S))$ such that 
${\bf d}(a) = e\vee f$, ${\bf d}(b) = e\vee g$ where $f,g\in I_{\mathcal X}$ and $ae=be$.
By \cite[Proposition 5.10]{LV19} it follows that ${\mathrm S}({\mathcal G}(S)|_{\mathcal X})$ is isomorphic to ${\mathrm B}(S)/\sim_{\mathcal X}$.

\begin{theorem}\label{th:quotients2} Let $S$ be an inverse semigroup.  Suppose that ${\mathcal X}$ is a closed invariant subset of $\widehat{E(S)}$. Then the quotient ${\mathrm B}(S) / \sim_{\mathcal X}$ is isomorphic to ${\mathrm B}_{X_{\pi_{\mathcal X}}}(S)$ where $\pi_{\mathcal X}\colon S\to {\mathrm B}(S)/\sim_{\mathcal X}$ is the composition of $\iota_{{\mathrm B}(S)}\colon S\to {\mathrm B}(S)$ and the quotient map ${\mathrm{B}}(S)\to {\mathrm B}(S)/\sim_{\mathcal X}$. \end{theorem}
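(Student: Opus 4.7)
The plan is to use Stone duality to reduce the isomorphism to the set equality
\[
\widehat{E(S)}_{X_{\pi_{\mathcal X}}} = {\mathcal X}
\]
inside $\widehat{E(S)}$. Granting this, the restricted groupoids ${\mathcal G}_{X_{\pi_{\mathcal X}}}(S)$ and ${\mathcal G}(S)|_{\mathcal X}$ coincide as Stone groupoids, so their dual Boolean inverse semigroups agree: the first is ${\mathrm B}_{X_{\pi_{\mathcal X}}}(S)$ by definition, and the second is ${\mathrm B}(S)/\sim_{\mathcal X}$ by the realization ${\mathrm S}({\mathcal G}(S)|_{\mathcal X})\cong {\mathrm B}(S)/\sim_{\mathcal X}$ recalled from \cite[Proposition 5.10]{LV19} just before the theorem. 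A preliminary observation is that $X_{\pi_{\mathcal X}}$ is automatically $S$-invariant, since conjugating the defining equality $\pi_{\mathcal X}(e) = \bigvee_i\pi_{\mathcal X}(e_i)$ by $\pi_{\mathcal X}(s)$ and using distributivity of multiplication over finite compatible joins in the Boolean inverse semigroup ${\mathrm B}(S)/\sim_{\mathcal X}$ yields the analogous equality for $(s^{-1}es,\{s^{-1}e_1s,\dots,s^{-1}e_ns\})$; hence $X_{\pi_{\mathcal X}}'=X_{\pi_{\mathcal X}}$.

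The easy inclusion ${\mathcal X}\subseteq \widehat{E(S)}_{X_{\pi_{\mathcal X}}}$ I would dispatch by identifying $E({\mathrm B}(S)/\sim_{\mathcal X})$ canonically with the Boolean algebra of compact-opens of ${\mathcal X}$, under which $\pi_{\mathcal X}(e)$ becomes $M_e\cap {\mathcal X}$. Any defining relation of $X_{\pi_{\mathcal X}}$ then reads $M_e\cap{\mathcal X} = \bigcup_i (M_{e_i}\cap{\mathcal X})$, and evaluation at $\varphi\in{\mathcal X}$ immediately gives $\varphi(e)=\bigvee_i\varphi(e_i)$, so that $\varphi\in \widehat{E(S)}_{X_{\pi_{\mathcal X}}}$.

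The substantive step is the reverse inclusion $\widehat{E(S)}_{X_{\pi_{\mathcal X}}}\subseteq{\mathcal X}$, which I would extract from Lemma \ref{lem:character} applied to the semilattice-level representation $\pi_{\mathcal X}|_{E(S)}\colon E(S)\to E({\mathrm B}(S)/\sim_{\mathcal X})$. The hypothesis of that lemma holds because $\iota_{{\mathrm B}(S)}(E(S))$ generates $E({\mathrm B}(S))$ as a Boolean algebra and this generating property is inherited by any Boolean algebra quotient. For $\varphi\in \widehat{E(S)}_{X_{\pi_{\mathcal X}}}$ the lemma supplies an ultra character $\psi$ of $E({\mathrm B}(S)/\sim_{\mathcal X})$ with $\varphi=\psi\circ \pi_{\mathcal X}|_{E(S)}$; since ${\mathcal X}$ is the Stone dual of $E({\mathrm B}(S)/\sim_{\mathcal X})$, $\psi$ is evaluation at a unique $\theta\in{\mathcal X}$, and then $\varphi(e)=\psi(M_e\cap{\mathcal X})=\theta(e)$ for all $e\in E(S)$, forcing $\varphi=\theta\in{\mathcal X}$.

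The main obstacle is verifying that the Stone dual of $E({\mathrm B}(S)/\sim_{\mathcal X})$ is genuinely ${\mathcal X}$ as a subspace of $\widehat{E(S)}$, and not just abstractly homeomorphic to some other Stone space; this follows from unpacking the isomorphism ${\mathrm B}(S)/\sim_{\mathcal X}\cong {\mathrm S}({\mathcal G}(S)|_{\mathcal X})$ at the level of idempotents together with the characterization of basic compact-opens of $\widehat{E(S)}$ via the generators $M_a$. Once this identification and the set equality are in hand, Theorem \ref{th:dual_distr} promotes the coincidence of restricted groupoids to the asserted isomorphism of Boolean inverse semigroups.
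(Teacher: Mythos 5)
Your argument is correct and follows essentially the same route as the paper: the crux in both cases is Lemma \ref{lem:character}, which yields the identification ${\mathcal X}=\widehat{E(S)}_{X_{\pi_{\mathcal X}}}$, after which the isomorphism follows by duality. The paper phrases the conclusion via Theorem \ref{th:isom} and the universal property of ${\mathrm B}_{X_{\pi_{\mathcal X}}}(S)$ rather than via equality of the restricted groupoids, but this is the same argument seen from the dual side; your write-up simply makes explicit several steps (the invariance of $X_{\pi_{\mathcal X}}$, the identification of $E({\mathrm B}(S)/\sim_{\mathcal X})$ with the compact-opens of ${\mathcal X}$, and the easy inclusion) that the paper leaves implicit.
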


\begin{proof}  
The restriction of $\pi_{\mathcal X}$ to $E(S)$ is the universal $X_{\pi_{\mathcal X}}$-to-join representation of $E(S)$, by Theorem \ref{th:isom}. Moreover, applying Lemma \ref{lem:character} it follows that ${\mathcal X}$ is precisely the space of \mbox{$X_{\pi_{\mathcal X}}$-to-join} characters of $E(S)$. Therefore $\pi_{\mathcal X}$ is the universal $X_{\pi_{\mathcal X}}$-to-join representation of $S$ and ${\mathrm B}_{X_{\pi_{\mathcal X}}}(S)\simeq {\mathrm B}(S)/\sim_{\mathcal X}$. 
\end{proof}

Applying the bijection between closed invariant subsets of $\widehat{E(S)}$ and weakly meet-preserving quotients of ${\mathrm B}(S)$ (described in \cite{LV19}), we obtain the following statement.

\begin{corollary}\label{cor:quotients1} Let $S$ be an inverse semigroup. Suppose that $\varphi\colon {\mathrm B}(S) \to T$ is a surjective weakly meet-preserving additive morphism where $T$ is a Boolean inverse semigroup. Then there is an invariant subset $X\subseteq E(S)\times {\mathcal P}_{fin}(E(S))$ such that $T \simeq {\mathrm B}_X(S)$.  
\end{corollary}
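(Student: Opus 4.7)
The plan is to combine the bijection of \cite{LV19} between closed invariant subsets of $\widehat{E(S)}$ and weakly meet-preserving quotients of ${\mathrm B}(S)$ with Theorem \ref{th:quotients2}, which already did most of the real work in identifying quotients by closed invariant subsets with $X$-to-join Booleanizations. So the corollary should follow by essentially a bookkeeping argument on top of Theorem \ref{th:quotients2}.

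First I would invoke the bijection from \cite{LV19} to extract from the given surjective weakly meet-preserving morphism $\varphi\colon {\mathrm B}(S)\to T$ a closed invariant subset ${\mathcal X}\subseteq \widehat{E(S)}$ such that $T\simeq {\mathrm B}(S)/\!\sim_{\mathcal X}$. Conceptually, under Theorem \ref{th:dual_distr}, a weakly meet-preserving surjection dualizes to an injective at-most-single-valued relational covering morphism on unit spaces, i.e.\ an inclusion of a closed invariant subgroupoid, which at the level of objects is an inclusion of a closed invariant subset of $\widehat{E(S)}$; this is exactly the content of the cited bijection.

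Next I would apply Theorem \ref{th:quotients2} to this ${\mathcal X}$, obtaining ${\mathrm B}(S)/\!\sim_{\mathcal X}\simeq {\mathrm B}_{X_{\pi_{\mathcal X}}}(S)$, where $\pi_{\mathcal X}$ is the canonical composite $S\to {\mathrm B}(S)\to {\mathrm B}(S)/\!\sim_{\mathcal X}$. Composing the two isomorphisms yields $T\simeq {\mathrm B}_{X_{\pi_{\mathcal X}}}(S)$. To satisfy the statement of the corollary I then set $X=(X_{\pi_{\mathcal X}})'$, the smallest $S$-invariant subset of $E(S)\times {\mathcal P}_{fin}(E(S))$ containing $X_{\pi_{\mathcal X}}$. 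By the lemma in Subsection 3.3 asserting that $X$-to-join and $X'$-to-join representations coincide, we have ${\mathrm B}_X(S)={\mathrm B}_{X_{\pi_{\mathcal X}}}(S)$, while $X$ is invariant by construction; hence $T\simeq {\mathrm B}_X(S)$ as required.

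I do not expect any significant obstacle, since Theorem \ref{th:quotients2} already packages the main content (going from a closed invariant subset to an $X$-to-join Booleanization via the $\pi$-tight machinery of Section 3.2), and the only remaining points are (a) invoking the \cite{LV19} bijection to produce the correct ${\mathcal X}$ from the abstract quotient $\varphi$, and (b) replacing $X_{\pi_{\mathcal X}}$ by its invariant closure so that the set $X$ advertised in the statement is genuinely $S$-invariant. The mildest care is needed in step (a) to ensure surjectivity of $\varphi$ matches surjectivity on the algebraic side of the bijection; this is automatic given how $\sim_{\mathcal X}$ is defined.
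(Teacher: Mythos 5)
Your proposal is correct and follows essentially the same route as the paper, which likewise obtains the corollary by combining the \cite{LV19} bijection between closed invariant subsets of $\widehat{E(S)}$ and weakly meet-preserving quotients of ${\mathrm B}(S)$ with Theorem \ref{th:quotients2}. Your extra step of passing to the invariant closure $(X_{\pi_{\mathcal X}})'$ is a reasonable precaution, though in fact $X_{\pi_{\mathcal X}}$ is already $S$-invariant because $\pi_{\mathcal X}$ is a semigroup morphism and multiplication distributes over compatible joins in a Boolean inverse semigroup.
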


\section{Intermediate boundary quotients of $C^*$-algebras of Zappa-Sz\'ep product right LCM semigroups via $X$-to-join representations}\label{sect:intermediate}

\subsection{$X$-to-join representations of inverse semigroups in $C^*$-algebras}
Let $S$ be an inverse semigroup and $A$ a  $C^*$-algebra.

\begin{definition}\cite[Definition 10.4]{Exel:comb} A map $\sigma\colon S\to A$ is a {\em{representation}}  if the following conditions hold:
\begin{enumerate}
\item $\sigma(0)=0$;
\item $\sigma(st)=\sigma(s)\sigma(t)$ for all $s,t\in S$;
\item $\sigma(s^{-1})=(\sigma(s))^*$ for all $s\in S$.
\end{enumerate}
\end{definition}

Let $D_{\sigma}$ be the $C^*$-subalgebra of $A$ generated $\sigma(E(S))$ and let
$$
B_{\sigma} = \{e\in D_{\sigma}\colon e^2=e\}.
$$ 
It is a Boolean algebra with respect to the operations
$$
a\wedge b= ab, \,\, a\vee b=a+b-ab, \,\, a\setminus b=a-ab.
$$

The following is an adaptation of  \cite[Definition 13.1]{Exel:comb}.

\begin{definition} {\em Let $X\subseteq E(S)\times {\mathcal P}_{fin}(E(S))$.  A representation $\sigma$ of $S$ in $A$ is called $X$-to-join if the restriction of $\sigma$ to $E(S)$ is an $X$-to-join representation of $E(S)$ in the Boolean algebra $B_{\sigma}$.}
\end{definition}

\subsection{Right LCM semigroups and their left inverse hulls} A semigroup $P$ is called a {\em{right LCM semigroup}} if it is left cancellative and the intersection of any two principal right ideals is either empty or a principal right ideal. We assume that $P$ has the identity element denoted $1_P$. By ${\mathcal J}(P)$ we denote the set of all principal right ideals of $P$, plus the empty \mbox{set $\varnothing$.}

Let $P$ be a right $LCM$ semigroup and $U(P)$ the group of units of $P$, that is, the group of invertible elements of $P$. There is an equivalence relation, $\sim$, on $P\times P$, given by $(p,q)\sim (a,b)$ if and only if there is $u\in U(P)$ such that $p=au$ and $q=bu$. Let $[p,q]$ denote the $\sim$-class of $(p,q)$, for any $p,q\in P$. It is known (see, e.g., \cite[Proposition 3.2]{Starling15}) that the set 
$$
{\mathcal S}_P = \{[p,q]\colon p,q\in P\}\cup \{0\}
$$
is an inverse semigroup with the identity $[1_P, 1_P]$, the multiplication operation given by
$$
[a,b][c,d] = \left\lbrace\begin{array}{ll}[ab',dc'], & {\text{if }} bP\cap cP = rP {\text{ and }} bb'=cc'=r, \\
0, & {\text{if }} bP\cap cP = \varnothing, \end{array}\right.
$$
$s0=0s=0$ for all $s\in {\mathcal S}_P$, and the inversion operation given by $[p,q]^{-1}=[q,p]$. The idempotents of ${\mathcal{S}}_P$ are precisely the elements $[p,p]$ where $p$ runs through $P$.  It is easy to check  that the inclusion $pP\subseteq qP$ is equivalent to $[p,p]\leq [q,q]$, for all $p,q\in P$.
The inverse semigroup ${\mathcal S}_P$ is called the {\em left inverse hull} of $P$.

\subsection{The $C^*$-algebra and the boundary quotient $C^*$-algebra of a right LCM semigroup}

 The following is a special case of the definition due to Li~\cite{Li2012} (see also \cite{Norling}).

\begin{definition} Let $P$ be a countable right LCM semigroup. Then $C^*(P)$ is defined as the universal $C^*$-algebra generated by a set of isometries $\{v_p\colon p\in P\}$ and a set of projections $\{e_X\colon X\in {\mathcal{J}}(P)\}$ subject to the following relations:
\begin{enumerate}[(L1)]
\item $v_pv_q=v_{pq}$ for all $p,q\in P$;
\item $v_pe_{X}v_p^* = e_{pX}$ for all $p\in P$;
\item $e_{P} = 1$, $e_{\varnothing} =0$;
\item $e_Xe_Y = e_{X\cap Y}$, for all $X,Y\in {\mathcal J}(P)$.
\end{enumerate}
\end{definition}

It is easy to check (or see \cite[Lemma 3.4]{Starling15}) that the map $\sigma\colon {\mathcal S}_P\to C^*(P)$ given by 
\begin{equation}\label{eq:sigma}
\sigma([p,q]) = v_pv_q^*, \,\,  p,q\in P, \, \text{ and  } \sigma(0)=0
\end{equation} is a representation.

Norling \cite{Norling} showed that $C^*(P)$ is isomorphic to $C_u^*({\mathcal S}_P)$, the {\em universal $C^*$-algebra of the inverse semigroup} ${\mathcal S}_P$, that is, it is the universal $C^*$-algebra generarted by one element for each element of ${\mathcal S}_P$, such that the standard map $\pi_u\colon {\mathcal S}_P\to C^*({\mathcal S}_P)$ is a representation. It follows from \cite[4.4.1]{Paterson} that $C^*(P)$ is isomorphic to the groupoid $C^*$-algebra $C^*({\mathcal G}({\mathcal S}_P))$ of the universal groupoid ${\mathcal G}({\mathcal S}_P)$ of ${\mathcal S}_P$.

A finite subset $F\subset P$ is called a {\em foundation set} if for all $p\in P$ there exists $f\in F$ such that $fP\cap pP\neq \varnothing$. 

\begin{lemma}\label{lem:found} A subset $F\subset P$ is a foundation set if and only if the set $\{[f,f]\colon f\in F\}$ is a cover of $[1_P, 1_P]$ in $E({\mathcal S}_P)$.
\end{lemma}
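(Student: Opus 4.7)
The plan is to reduce both conditions to the same statement about nonempty intersections of principal right ideals $pP \cap fP$, using the explicit description of $E({\mathcal S}_P)$ and its meet operation.

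First I would record the structural facts needed. By the description given right before the lemma, the nonzero idempotents of ${\mathcal S}_P$ are exactly the elements $[p,p]$ with $p\in P$, and $[1_P,1_P]$ is the identity of ${\mathcal S}_P$, hence the top of $E({\mathcal S}_P)$; so every nonzero $y \leq [1_P,1_P]$ in $E({\mathcal S}_P)$ has the form $y=[p,p]$ for some $p\in P$. Next I would compute the meet of two idempotents. Using the multiplication formula with $a=b=p$, $c=d=f$: if $pP\cap fP=\varnothing$, then $[p,p][f,f]=0$; otherwise $pP\cap fP = rP$ and, taking $p',f'\in P$ with $pp'=ff'=r$, the product equals $[pp',ff']=[r,r]$. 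Thus
\[
[p,p]\wedge [f,f] \;\neq\; 0 \quad \Longleftrightarrow \quad pP\cap fP \;\neq\; \varnothing.
\]

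With this equivalence in hand, both directions of the lemma are immediate. For the forward direction, assume $F$ is a foundation set and let $y\leq [1_P,1_P]$ be a nonzero idempotent. Write $y=[p,p]$; by the foundation property there exists $f\in F$ with $pP\cap fP\neq\varnothing$, hence $y\wedge [f,f]\neq 0$, showing $\{[f,f]:f\in F\}$ is a cover of $[1_P,1_P]$. Conversely, if $\{[f,f]:f\in F\}$ covers $[1_P,1_P]$, then for every $p\in P$ the nonzero idempotent $[p,p]$ lies below $[1_P,1_P]$, so some $f\in F$ satisfies $[p,p]\wedge[f,f]\neq 0$, which by the equivalence above means $pP\cap fP\neq\varnothing$; thus $F$ is a foundation set.

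There is no real obstacle here: the only point that requires a moment's care is the computation of $[p,p][f,f]$ from the multiplication rule, since the formula involves auxiliary elements $p',f'$, but it collapses cleanly to $[r,r]$. Everything else is a translation of definitions via the order-isomorphism $pP\subseteq qP \Leftrightarrow [p,p]\leq[q,q]$ that was already noted.
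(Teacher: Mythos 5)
Your proof is correct and follows essentially the same route as the paper: both hinge on the equivalence $[p,p][f,f]\neq 0 \Leftrightarrow pP\cap fP\neq\varnothing$ read off from the multiplication formula, together with the fact that every nonzero element of $E({\mathcal S}_P)$ is of the form $[p,p]$ and lies below the identity $[1_P,1_P]$. If anything, your write-up is more complete, since the paper only spells out the forward direction and leaves the converse implicit.
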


\begin{proof} Suppose that $F$ is a foundation set and $a\in P$. Then there is $f\in F$ such that $aP\cap fP\neq \varnothing$.  This is equivalent to $[a,a][f,f]\neq 0$ which implies that $\{[f,f]\colon f\in F\}$ is a cover of $[1_P,1_P]$.
\end{proof}

The next definition is taken from \cite{BRRW}.

\begin{definition} Let $P$ be a countable right LCM semigroup. The {\em boundary quotient} ${\mathcal Q}(P)$ of $C^*(P)$ is the universal $C^*$-algebra generated by the sets $\{v_p\colon p\in P\}$ and $\{e_X\colon X\in {\mathcal J}(P)\}$ subject to relations (L1)--(L4) and 
$$
\prod_{p\in F}(1-e_{pP}) = 0 \,\, \text{for all foundation sets } F\subset P. 
$$
\end{definition}

There is another $C^*$-algebra associated to $P$, namely the {\em tight} $C^*$-algebra $C^*_{tight}({\mathcal S}_P)$. It is the universal $C^*$-algebra generated by one element for each element of ${\mathcal S}_P$ subject to the relations saying that the standard map $\pi_t\colon {\mathcal S}_P\to C_{tight}^*({\mathcal S}_P)$ is a tight representation, or, equivalently (see \cite{Exel19}), a cover-to-join representation. By a result of Exel \cite[Proposition~13.3]{Exel:comb} this $C^*$-algebra is  isomorphic to the $C^*$-algebra of the tight groupoid ${\mathcal G}_{tight}({\mathcal S}_P)$. It was proved by Starling \cite{Starling15} that the $C^*$-algebras $C^*_{tight}({\mathcal S}_P)$ and ${\mathcal Q}(P)$ are isomorphic.

\subsection{Zappa-Sz\'ep products: a construction due to Brownlowe, Ramagge, Robertson and Whittaker}
Let $P$ be a semigroup with unit $e$ and suppose that $U,A \subseteq P$ are subsemigroups such that the following conditions hold:
\begin{itemize}
\item $U\cap A = \{e\}$;
\item for all $p\in P$ there is a unique $(u,a)\in U\times A$ such that $p=ua$.
\end{itemize}
Then $P$ is called an {\em internal Zappa-Sz\'ep product} $U \bowtie A$ of $U$ and $A$.
If $u\in U$ and $a\in A$ we write $au = (a\cdot u)a|_u$ where $(a\cdot u) \in U$ and $a|_u\in A$. This data defines the {\em action} and the {\em restriction} maps.  Zappa-Sz\'ep products as above were introduced by Brin \cite{Brin}, where an equivalent, external, definition of  $U \bowtie A$ was found.
The paper \cite{BRRW} deals with Zappa-Sz\'ep products $P=U \bowtie A$ such that the following conditions hold:
\begin{itemize}
\item[(C1)] $U,A$ are right LCM;
\item[(C2)] ${\mathcal J}(A)$ is totally ordered by inclusion;
\item[(C3)] The map $u \mapsto a\cdot u$ is a bijection for each $a\in A$.
\end{itemize}
It is proved in \cite{BRRW} that in this case $P$ is right LCM.
Many interesting and extensively studied examples of semigroups appear to be decomposable as such Zappa-Sz\'ep products, see \cite{BRRW}, among which is the semigroup ${\mathbb N} \rtimes {\mathbb N^{\times}}$ (see~\cite{BaHLR} and references therein).

Suppose that axioms (C1), (C2) and (C3) hold. Brownlowe, Ramagge, Robertson, Whittaker \cite[Theorem 5.2]{BRRW} proved that
${\mathcal Q}(U \bowtie A)$ is a quotient of $C^*(U \bowtie A)$ by the following relations:
\begin{itemize}
\item[(Q1)] $e_{aP} = 1$ for all $a\in A$;
\item[(Q2)] $\prod\limits_{f\in F} (1- e_{fP}) = 0$ for all foundation sets $F\subseteq U$ of $U$.
\end{itemize}

Relations (Q1) and (Q2) are formulated in \cite[Theorem 5.2]{BRRW} slightly differently, but our formulation above is equivalent to theirs: if $P$ is a right LCM semigroup that is an internal Zappa-Sz\'ep product satisfying conditions (C1)-(C3) then $C^*(P)$ is generated by $v_u$, $u\in U$, and $v_a$, $a\in A$ and thus $t_u$ (respectively $s_a$) of \cite[Theorem 5.2]{BRRW} can be identified with $v_u$ (respectively $v_a$).

The following two quotients of $C^*(U\bowtie A)$ are also defined in \cite{BRRW}:
\begin{itemize}
\item $C^*_A(U\bowtie A)$ --- the quotient by relations (Q1) called the {\em additive boundary quotient};
\item $C^*_U(U\bowtie A)$ ---  the quotient by relations (Q2) called the {\em multiplicative boundary quotient}.
\end{itemize}
These quotients, for the semigroup ${\mathbb N}\rtimes {\mathbb N}^{\times}$, were studied earlier in \cite{BaHLR}. 

\subsection{$C^*_A(U\bowtie A)$ and $C^*_U(U\bowtie A)$ via $X$-to-join representations}

Let ${\mathcal S}_P$ denote the left inverse hull of $P=U\bowtie A$. We define the following subsets of $E({\mathcal S}_P) \times {\mathcal P}_{fin}(E({\mathcal S}_P))$:

\begin{itemize}
\item the set $X_A$ consists of all $([a,a], \{[b,b]\})$ where $a\in A$ and $b\in aA$;
\item the set $X_U$ consists of all $([s,s], \{[s_1,s_1], \dots, [s_n,s_n]\})$ where $s\in U$, $s_i\in sU$ for all $i\in \{1,\dots, n\}$ and for each  $t\in sU$  there is $i\in \{1,\dots, n\}$ satisfying $s_iU\cap tU\neq \varnothing$. 
\end{itemize}

We consider two more $C^*$-algebras associated to $P$:  $C^*_{A}({\mathcal S}_P)$, and $C^*_{U}({\mathcal S}_P)$. They are defined as the universal $C^*$-algebras generated by one element for each element of ${\mathcal S}_P$ subject to the following relations:
\begin{enumerate}
\item for $C^*_{A}({\mathcal S}_P)$ these are the relations saying that the standard map $\pi_A\colon {\mathcal S}_P\to C^*_A({\mathcal S}_P)$ is an $X_A$-to-join representation.
\item for $C^*_{U}({\mathcal S}_P)$ these are the relations saying that the standard map $\pi_U\colon {\mathcal S}_P\to C^*_U({\mathcal S}_P)$ is an $X_U$-to-join representation.
\end{enumerate}

\begin{theorem} \label{th:isomorphisms} Let $P$ be a countable right LCM semigroup which is decomposed as \mbox{$P=U\bowtie A$} and suppose that conditions (C1), (C2) and (C3) hold. The following pairs of $C^*$-algebras are isomorphic:
\begin{enumerate}
\item $C^*_A(U\bowtie A)$ and  $C^*_{A}({\mathcal S}_P)$;
\item $C^*_U(U\bowtie A)$ and   $C^*_{U}({\mathcal S}_P)$.
\end{enumerate}
\end{theorem}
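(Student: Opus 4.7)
The plan is to exploit Norling's isomorphism $C^*(P) \cong C^*_u({\mathcal S}_P)$, under which generators are identified via $v_p v_q^* \leftrightarrow \pi_u([p,q])$. Since the four $C^*$-algebras in the statement are each quotients of one of these two (isomorphic) algebras by explicitly listed relations, it suffices to verify that, under this identification, the $(Q1)$ relations and the $X_A$-to-join relations generate the same ideal, and similarly for $(Q2)$ and $X_U$-to-join. Both parts then follow from the universal properties of the respective quotient $C^*$-algebras.

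For part (1), note that $\pi_u([a,a])$ corresponds to $e_{aP}$ and $\pi_u([1_P,1_P])$ to $1$ (the latter because $v_{1_P}$ is forced by $(L1)$--$(L4)$ to equal the unit). The $X_A$-to-join relations require $\pi([a,a])=\pi([b,b])$ for $a \in A$ and $b \in aA$. Assuming $(Q1)$, every $e_{aP}$ with $a \in A$ equals $1$, so in particular $e_{aP}=e_{bP}$ for $b\in aA\subseteq A$. For the converse, specialize the $X_A$-to-join relation to $a=1_P\in A$: as $b$ ranges over $1_P A = A$ we get $e_{bP}=\pi([b,b])=\pi([1_P,1_P])=1$, which is exactly $(Q1)$.

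For part (2), the key computation is that the projections $v_{u_i}v_{u_i}^*$ pairwise commute (they come from the semilattice $E({\mathcal S}_P)$), so for any $s, u_1,\dots,u_n \in U$,
\begin{equation*}
v_s v_s^* \;-\; \bigvee_{i=1}^n v_s v_{u_i} v_{u_i}^* v_s^* \;=\; v_s \Bigl(\prod_{i=1}^n (1 - v_{u_i}v_{u_i}^*)\Bigr) v_s^*.
\end{equation*}
Given $(s,\{s_1,\dots,s_n\})\in X_U$ with $s\in U$, write $s_i = s u_i$; by left cancellativity of $U$, the cover condition in the definition of $X_U$ is equivalent to $\{u_1,\dots,u_n\}$ being a foundation set of $U$. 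Hence $(Q2)$ applied to $\{u_1,\dots,u_n\}$ forces the right-hand side above to vanish, yielding $v_s v_s^* = \bigvee_i v_{s u_i}v_{s u_i}^*$, i.e.\ the $X_U$-to-join identity. Conversely, specializing the $X_U$-to-join relation to $s=1_P$ gives $1 = \bigvee_i v_{u_i} v_{u_i}^*$ for each foundation set $\{u_i\} \subseteq U$, which is $(Q2)$.

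The technical points to verify are that Norling's isomorphism correctly intertwines the generators, that the Boolean-algebra join in $B_\sigma$ coincides with $e\vee f = e+f-ef$ for commuting projections so that the $\bigvee$ in the $X$-to-join equations has the claimed $C^*$-theoretic meaning, and that $\pi_u([1_P,1_P])$ is indeed the unit. The main obstacle is bookkeeping: matching the index set $X_U$ (parameterized by arbitrary $s\in U$) to the $(Q2)$ relations (parameterized only by foundation sets of $U$, i.e.\ the case $s=1_P$), using the displayed identity to propagate one to the other. Much of the conceptual framework parallels Starling's treatment of the tight/boundary case in \cite{Starling15}, which we adapt to the intermediate setting.
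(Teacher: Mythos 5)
Your proposal is correct and follows essentially the same route as the paper: both directions reduce to checking that (Q1) is equivalent to the $X_A$-to-join relations (via specialization to $a=1_P$) and that (Q2) is equivalent to the $X_U$-to-join relations, using the bijection $s_i=su_i$ between the covers in $X_U$ and foundation sets of $U$ together with the conjugation identity $v_s(\bigvee_i e_{u_iP})v_s^*=\bigvee_i e_{su_iP}$. The only cosmetic difference is that you phrase the argument as equality of ideals of relations under Norling's isomorphism, while the paper exhibits canonical quotient maps in both directions via the universal properties of $C^*_A({\mathcal S}_P)$, $C^*_U({\mathcal S}_P)$ and of the Brownlowe--Ramagge--Robertson--Whittaker quotients.
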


\begin{proof}  Since $[a,a]\leq [1_P,1_P]$ for all $a\in A$, we have that $\pi_A([a,a])=\pi_A([1_P,1_P])=1$ for all $a\in A$. In view of (Q1) it follows that $C^*_{A}({\mathcal S}_P)$ is a canonical quotient of $C^*_A(U\bowtie A)$. Similarly, $C^*_{U}({\mathcal S}_P)$ is a canonical quotient of $C^*_U(U\bowtie A)$.
The map $\sigma\colon {\mathcal S}_P\to C^*(U\bowtie A)$ from \eqref{eq:sigma} induces the maps $\sigma_A\colon {\mathcal S}_P\to C_A^*(U\bowtie A)$ and $\sigma_U\colon {\mathcal S}_P\to C_U^*(U\bowtie A)$.

To complete the proof of part (1), we show that $C^*_A(U\bowtie A)$ is a canonical quotient of $C^*_{A}({\mathcal S}_P)$. Due to the universal property of $C^*_{A}({\mathcal S}_P)$ it suffices to show that $\sigma_A$ is an $X_A$-to-join representation. First note that $\sigma_A([1_P,1_P]) = e_P =1$ so by (Q1) we have $\sigma_A([a,a]) = e_{aP}=1$ for all $a\in A$. Let now $([a,a], \{[b,b]\})\in X_A$. Let $c\in A$ be such that $b=ac$. We then have
$$
\sigma_A([a,a]) = v_a1v_a^* = v_ae_{cP}v_a^*=e_{acP} = e_{bP} = v_bv_b^* = \sigma_A([b,b]),
$$
as needed.

To complete the proof of part (2), we show that $C^*_U(U\bowtie A)$ is a canonical quotient of $C^*_{U}({\mathcal S}_P)$. Due to the universal property of $C^*_{U}({\mathcal S}_P)$ it suffices to show that $\sigma_U$ is an $X_U$-to-join representation. In a similar way as in \cite[proof of Lemma 3.5]{Starling15} we see that for  any foundation set $F\subseteq U$ of $U$ (Q2) implies that $1= \bigvee_{f\in F} \sigma_U([f,f])$. Let now $([s,s], \{[s_1,s_1], \dots, [s_n,s_n]\})\in X_U$. For all $i=1,\dots, n$: since $s_i\in sU$ there is $u_i\in U$ such that $s_i=su_i$.
Let $u\in U$. Then $su \in sU$ and the definition of $X_U$ implies that there is $i\in \{1,\dots, n\}$ such that $s_iU\cap suU\neq \varnothing$, that is $su_iU \cap suU \neq \varnothing$. Thus $u_iU\cap uU\neq \varnothing$. We have shown that $\{u_1,\dots, u_n\}\subseteq U$ is a foundation set of $U$. It follows that $\bigvee_{i=1}^n \sigma_U([u_i,u_i]) = 1$.  In a similar way as in  \cite[proof of Lemma 3.5]{Starling15} we then have
\begin{equation*} \sigma_U([s,s]) = v_s1v_s^* = v_s\left(\bigvee_{i=1}^ne_{u_iP}\right)v_s^* = \bigvee_{i=1}^n v_se_{u_iP}v_i^* = \bigvee_{i=1}^n e_{su_iP} = \bigvee_{i=1}^n e_{s_iP} = \bigvee_{i=1}^n
\sigma_U([s_i,s_i]),
\end{equation*}
as needed. This completes the proof.
\end{proof}

Applying an adaptation of \cite[Theorem 13.2 and Theorem 13.3]{Exel:comb} to $X$-to join representations, we obtain the following statement.

\begin{corollary} \label{cor:iso} $C^*_A(U\bowtie A)$ and $C^*_U(U\bowtie A)$ are groupoid $C^*$-algebras.
\end{corollary}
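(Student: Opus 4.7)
The plan is to combine Theorem \ref{th:isomorphisms} with an $X$-to-join analogue of Exel's identification of the tight $C^*$-algebra of an inverse semigroup with the $C^*$-algebra of the tight groupoid. Concretely, I will argue that for any inverse semigroup $\mathcal S$ and any $X\subseteq E({\mathcal S})\times {\mathcal P}_{fin}(E({\mathcal S}))$, the universal $C^*$-algebra $C^*_X({\mathcal S})$ generated by an $X$-to-join representation of $\mathcal S$ is canonically isomorphic to $C^*({\mathcal G}_X({\mathcal S}))$. Once this is established, applying it with ${\mathcal S}={\mathcal S}_P$ and $X\in\{X_A,X_U\}$ and combining with Theorem \ref{th:isomorphisms} immediately yields that $C^*_A(U\bowtie A)\cong C^*({\mathcal G}_{X_A}({\mathcal S}_P))$ and $C^*_U(U\bowtie A)\cong C^*({\mathcal G}_{X_U}({\mathcal S}_P))$, proving the corollary.

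To prove the key isomorphism $C^*_X({\mathcal S})\cong C^*({\mathcal G}_X({\mathcal S}))$, I would construct mutually inverse $*$-homomorphisms. In one direction, the map $\pi\colon s\mapsto 1_{\Theta[s]\cap {\mathcal G}_X({\mathcal S})}$ sending $s\in {\mathcal S}$ to the characteristic function of the corresponding compact-open bisection in $C^*({\mathcal G}_X({\mathcal S}))$ is a representation, and when restricted to $E({\mathcal S})$ it lands in the commutative $C^*$-subalgebra $C_0(\widehat{E({\mathcal S})}_{X'})$, whose spectrum is $\widehat{E({\mathcal S})}_{X'}$. Since every character in this spectrum is $X$-to-join by construction, evaluating the defining relation $\bigl(e,\{e_1,\dots,e_n\}\bigr)\in X$ against any such character gives $\pi(e)=\bigvee_{i=1}^n\pi(e_i)$, so $\pi$ is an $X$-to-join representation and therefore factors through $C^*_X({\mathcal S})$. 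In the other direction, the standard $X$-to-join representation $\pi_X\colon {\mathcal S}\to C^*_X({\mathcal S})$ restricts to an $X$-to-join representation of $E({\mathcal S})$ into the Boolean algebra $B_{\pi_X}$; by Theorem \ref{th:isom}, $B_{\pi_X}$ is isomorphic to ${\mathrm B}_X(E({\mathcal S}))$, whose Stone dual is $\widehat{E({\mathcal S})}_{X'}$. This identifies the diagonal subalgebra of $C^*_X({\mathcal S})$ with $C_0(\widehat{E({\mathcal S})}_{X'})$ and lets one mimic Exel's construction of a covariant family in the convolution algebra of ${\mathcal G}_X({\mathcal S})$, yielding the reverse $*$-homomorphism $C^*({\mathcal G}_X({\mathcal S}))\to C^*_X({\mathcal S})$.

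The main obstacle is precisely this identification of the character space of $B_{\pi_X}$ (or, more generally, of $B_{\sigma}$ for an arbitrary $X$-to-join representation $\sigma$) with $\widehat{E({\mathcal S})}_{X'}$. In Exel's original argument for the tight case this rests on the observation that characters of $B_\sigma$ pull back along $\sigma|_{E({\mathcal S})}$ to tight characters of $E({\mathcal S})$, and conversely every tight character arises this way; the crux is the passage from the semilattice join relations $\sigma(e)=\bigvee_i\sigma(e_i)$ in $B_\sigma$ to the corresponding equalities $\varphi(e)=\bigvee_i\varphi(e_i)$ for the characters $\varphi=\alpha\circ\sigma|_{E({\mathcal S})}$. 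For our $X$-to-join analogue this step is essentially Lemma \ref{lem:character} together with Theorem \ref{th:isom}, so the adaptation is routine: one replaces ``tight character'' by ``$X$-to-join character'' and ``${\widehat{E(S)}_{tight}}$'' by ``$\widehat{E({\mathcal S})}_{X'}$'' throughout Exel's proof, using $S$-invariance of $X'$ (Lemma \ref{lem:invariant}) in place of the invariance of the tight spectrum.

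Finally, having established $C^*_X({\mathcal S}_P)\cong C^*({\mathcal G}_X({\mathcal S}_P))$ for both $X=X_A$ and $X=X_U$, Theorem \ref{th:isomorphisms} completes the identification of $C^*_A(U\bowtie A)$ and $C^*_U(U\bowtie A)$ as groupoid $C^*$-algebras of ${\mathcal G}_{X_A}({\mathcal S}_P)$ and ${\mathcal G}_{X_U}({\mathcal S}_P)$, respectively.
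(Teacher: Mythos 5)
Your proposal is correct and follows essentially the same route as the paper, which justifies the corollary in one line by ``applying an adaptation of \cite[Theorem 13.2 and Theorem 13.3]{Exel:comb} to $X$-to-join representations'' and then invoking Theorem~\ref{th:isomorphisms}. Your write-up simply makes explicit what that adaptation involves (identifying the spectrum of the diagonal with $\widehat{E({\mathcal S})}_{X'}$ via Lemma~\ref{lem:character} and Theorem~\ref{th:isom}), which is consistent with the paper's intent.
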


\begin{remark} We remark that intermediate quotients for $C^*$-algebras of right LCM semigroups satisfying certain requirements were considered by Stammeier in \cite{Stammeier}. For a right LCM semigroup $S$ he introduced the core subsemigroup $S_c$ and the core irreducible subsemigroup $S_{ci}$ and considered the case where $S=S_{ci}^1S_c$. One can show, along  the lines of the proof of Theorem \ref{th:isomorphisms} that the intermediate boundary quotients considered in \cite{Stammeier} are isomorphic to universal $C^*$-algebras of ${\mathcal S}_S$ with respect to suitable $X$-to-join representations.  It follows that they are groupoid $C^*$-algebras, realizing a suggestion of \cite[p.~542]{Stammeier}.
\end{remark}


\begin{thebibliography}{99}
\bibitem{BKQS19} E. B\'edos, S. Kaliszewski, J. Quigg, J. Spielberg, On finitely aligned left cancellative small categoreis, Zappa-Sz\'ep products and Exel-Pardo algebras, preprint, arXiv:1712.09432 (2017).
\bibitem{Brin} M. G. Brin, On the Zappa-Sz\'{e}p product, {\em Comm. Algebra} {\bf 33} (2005), 393--424.
\bibitem{BaHLR} N. Brownlowe, A. an Huef, M. Laca and I. Raeburn, Boundary quotients of the Toeplitz algebra of the affine semigroup over the natural numbers, {\em Ergodic Theory Dynam. Systems} {\bf 32} (2012), 35--62. 
\bibitem{BRRW} N. Brownlowe, J. Ramagge, D. Robertson, M. F. Whittaker, Zappa-Sz\'ep products of semigroups and their $C^*$-algebras, {\em J. Funct. Anal.} {\bf 266}  (6) (2014), 3937--3967.
\bibitem{Doctor} H. P. Doctor, The categories of Boolean lattices, Boolean rings and Boolean spaces, {\em Canad.
Math. Bull.} {\bf 7} (1964), 245--252.
\bibitem{DM} A. P. Donsig,  D. Milan, Joins and covers in inverse semigroups and tight $C^*$-algebras,
{\em Bull. Aust. Math. Soc.} {\bf 90} (1) (2014), 121--133. 
\bibitem{Exel09} R. Exel, Tight representations of semilattices and inverse semigroups, {\em Semigroup Forum} {\bf 79} (2009), 159--182.
\bibitem{Exel:comb} R. Exel, Inverse semigroups and combinatorial $C^*$-algebras, 
{\em Bull. Braz. Math. Soc. (N.S.)} {\bf 39} (2008) (2), 191--313. 
\bibitem{Exel19} R. Exel, Tight and cover-to-join representations of semilattices and inverse semigroups, preprint, arXiv:1903.02911 (2019).
\bibitem{EGS12} R. Exel, D. Goncalves, C. Starling, The tiling $C^*$-algebra viewed as a tight inverse semigroup algebra, {\em Semigroup Forum}, {\bf 84} (2) (2012), 229--240.
\bibitem{EP17} R. Exel, E. Pardo, Self-similar graphs, a unified treatment of Katsura and Nekrashevych $C^*$-algebras, {\em Adv. Math.} {\bf 306} (2017), 1046--1129.
\bibitem{ES18} R. Exel, B. Steinberg, The inverse hull of $0$-left cancellative semigroups, {\em Proceedings of the International Congress of Mathematicians -- Rio de Janeiro 2018. Vol. III. Invited lectures}, 1583--1611, World Sci. Publ., Hackensack, NJ, 2018. 
\bibitem{ES19} R. Exel, B. Steinberg, Subshift semigroups, preprint, arXiv:1908.08315 (2019).
\bibitem{GH} S. Givant, P. Halmos, {\em Introduction to Boolean algebras}, 
Undergraduate Texts in Mathematics, Springer, New York, 2009.
\bibitem{KL17} G. Kudryavtseva, M. V. Lawson, A perspective on non-commutative frame theory,  {\em Adv. Math.} {\bf 311} (2017), 378--468.
\bibitem{Lawson} M. V. Lawson, {\em Inverse semigroups. The theory of partial symmetries}, World Scientific Publishing Co., Inc., River Edge, NJ, 1998.
\bibitem{Lawson12} M. V. Lawson, Non-commutative Stone duality: inverse semigroups, topological groupoids and $C^*$-algebras, {\em Internat. J. Algebra  Comput.} {\bf 22} (6) (2012), 1250058 (47 pages). 
\bibitem{Lawson20} M. V. Lawson, The Booleanization of an inverse semigroup, {\em Semigroup Forum} {\bf 100} (1) (2020), 283--314.
\bibitem{LL13} M. V. Lawson, D. H. Lenz, Pseudogroups and their \'etale groupoids, {\em Adv. Math.} {\bf 244} (2013), 117--170.
\bibitem{LV19} M. V. Lawson, A. Vdovina, The universal Boolean inverse semigroup presented by the abstract Cuntz-Krieger relations, preprint, arXiv: 1902.02583 (2019).
\bibitem{Li2012} X. Li, Semigroup $C^*$-algebras and amenability of semigroups, {\em J. Funct. Anal.} {\bf 262} (2012), 4302--4340.
\bibitem{Norling} M. D. Norling, 
Inverse semigroup $C^*$-algebras associated with left cancellative semigroups, 
{\em Proc. Edinb. Math. Soc. (2)} {\bf 57} (2) (2014), 533--564. 
\bibitem{OP19} E. Ortega, E. Pardo, The tight groupoid of the inverse semigroups of left cancellative small categories, preprint, arXiv:1906.07487 (2019).
\bibitem{Paterson} A. L. T. Paterson, {\em Groupoids, inverse semigroups, and their operator algebras}, Progress in Mathematics 170, Birkh\"{a}user Boston, Inc., Boston, MA, 1999.
\bibitem{Renault} J. Renault, {\em A groupoid approach to $C^*$-algebras},
Lecture Notes in Mathematics, 793, Springer, Berlin, 1980.
\bibitem{Stammeier} N. Stammeier, A boundary quotient diagram for right LCM semigroups,
{\em Semigroup Forum} {\bf 95} (3) (2017), 539--554.
\bibitem{Starling15}  Ch. Starling, Boundary quotients of $C^*$-algebras of right $LCM$ semigroups, {\em J. Func. Anal.} {\bf 268} (2015), 3326--3356. 
\bibitem{Stone36} M. H. Stone, The theory of representation for Boolean algebras, {\em Trans. Amer. Math. Soc.}
{\bf 74:1} (1936), 37--111.
\bibitem{W17} F. Wehrung,  {\em Refinement monoids, equidecomposability types, and Boolean inverse semigroups,} Lecture Notes in Mathematics, 2188, Springer, Cham, 2017.
\end{thebibliography}
\end{document}